\documentclass[11 pt]{amsart}
\usepackage{amsmath, amsthm,amssymb,bbm,enumerate,mathrsfs,mathtools}
\usepackage{a4wide}
\usepackage{tikz,xcolor,verbatim}
\usetikzlibrary{calc,shapes}
\usepackage{hyperref}
\usepackage{makecell}
\usepackage[numbers,sort&compress]{natbib}

 \usepackage[yyyymmdd,hhmmss]{datetime}
 \usepackage{background}
 \SetBgContents{ver. \currenttime \qquad \today }
 \SetBgScale{1}
 \SetBgAngle{0}
 \SetBgOpacity{1}
 \SetBgPosition{current page.south west} 
 \SetBgHshift{3cm}
 \SetBgVshift{0.5cm} 

%
%
\tikzset{
  bigblue/.style={circle, draw=blue!80,fill=blue!40,thick, inner sep=1.5pt, minimum size=5mm},
  bigred/.style={circle, draw=red!80,fill=red!40,thick, inner sep=1.5pt, minimum size=5mm},
  bigblack/.style={circle, draw=black!100,fill=black!40,thick, inner sep=1.5pt, minimum size=5mm},
  bluevertex/.style={circle, draw=blue!100,fill=blue!100,thick, inner sep=0pt, minimum size=2mm},
  redvertex/.style={circle, draw=red!100,fill=red!100,thick, inner sep=0pt, minimum size=2mm},
  blackvertex/.style={circle, draw=black!100,fill=black!100,thick, inner sep=0pt, minimum size=1.5mm},  
  whitevertex/.style={circle, draw=black!100,fill=white!100,thick, inner sep=0pt, minimum size=2mm},  
  smallblack/.style={circle, draw=black!100,fill=black!100,thick, inner sep=0pt, minimum size=1mm},
  smallwhite/.style={circle, draw=black!100,fill=white!100,thick, inner sep=0pt, minimum size=1mm}, 
  dummywhite/.style={circle, draw=white!100,fill=white!100,thick, inner sep=0pt, minimum size=.2mm}
}

\makeatletter
\pgfdeclareshape{myNode}{
  \inheritsavedanchors[from=rectangle] 
  \inheritanchorborder[from=rectangle]
  \inheritanchor[from=rectangle]{center}
  \inheritanchor[from=rectangle]{north}
  \inheritanchor[from=rectangle]{south}
  \inheritanchor[from=rectangle]{west}
  \inheritanchor[from=rectangle]{east}
  \backgroundpath{
    \southwest \pgf@xa=\pgf@x \pgf@ya=\pgf@y
    \northeast \pgf@xb=\pgf@x \pgf@yb=\pgf@y
    \pgfsetcornersarced{\pgfpoint{5pt}{5pt}}
    \pgfpathmoveto{\pgfpoint{\pgf@xa}{\pgf@ya}}
    \pgfpathlineto{\pgfpoint{\pgf@xa}{\pgf@yb}}
    \pgfpathlineto{\pgfpoint{\pgf@xb}{\pgf@yb}}
    \pgfsetcornersarced{\pgfpoint{5pt}{5pt}}
    \pgfpathlineto{\pgfpoint{\pgf@xb}{\pgf@ya}}
    \pgfpathclose
 }
}
\makeatother

\usepackage{xcolor}

\title[On Decomposing Graphs Into Forests and Pseudoforests]{On Decomposing Graphs Into Forests and Pseudoforests}

\author[Grout]{Logan Grout}
\address[Logan Grout]{Departement of Combinatorics and Optimization, University of Waterloo, Waterloo, ON, Canada}
\email{lcgrout@edu.uwaterloo.ca}

\author[Moore]{Benjamin Moore}
\address[Benjamin Moore]{Department of Combinatorics and Optimization, University of Waterloo, Waterloo, ON, Canada}  
\email{brmoore@uwaterloo.ca}

\date{}


\newtheorem{thm}[equation]{Theorem}
\newtheorem{lemma}[equation]{Lemma}

\newtheorem{conj}[equation]{Conjecture}
\newtheorem{cor}[equation]{Corollary}

\theoremstyle{definition}
\newtheorem{definition}[equation]{Definition}
\newtheorem{obs}[equation]{Observation}

\newtheorem*{ack}{Acknowledgements}

\newtheoremstyle{case}{}{}{\normalfont}{}{\itshape}{\normalfont:}{ }{}

\theoremstyle{case}

\numberwithin{equation}{section}

%
%

\date{}

\begin{document}
\begin{abstract}
We prove that for $k \in \mathbb{N}$ and $d \leq 2k+2$, if a graph has maximum average degree at most $2k + \frac{2d}{d+k+1}$, then $G$ decomposes into $k+1$ pseudoforests, where one of the pseudoforests has all connected components having at most $d$ edges.
\end{abstract}

\maketitle
\section{Introduction}

Throughout this paper, all graphs are finite and may contain multiple edges, but have no loops. All undefined graph theory terminology can be found in \cite{BondyAndMurty}. For a graph $G$, $V(G)$ denotes the vertex set and $E(G)$ denotes the edge set. We will let $v(G) = |V(G)|$ and $e(G) = |E(G)|$. Given a graph $G$, an \textit{orientation} of $G$ is obtained from $E(G)$ by taking each edge $xy$, and replacing $xy$ with exactly one of the arcs $(x,y)$ or $(y,x)$. For any vertex $v$, let $d(v)$ and $d^{+}(v)$ denote the degree and out-degree, respectively, of $v$. Let $\Delta(G)$ and $\Delta^{+}(G)$ denote the maximum degree and out-degree respectively.  For any graph $G$, we will say a \textit{decomposition} of $G$ is a set of edge disjoint subgraphs such that the union of their edges sets is $E(G)$. 

 We will focus on decompositions of $G$ into pseudoforests. A \textit{pseudoforest} is a graph where each connected component has at most one cycle. We motivate our results by looking at known results and conjectures about decomposing into forests. 
 
An interesting question is to determine conditions on a graph $G$ to guarantee a decomposition of $G$ into $k$ (pseudo)forests. For forests, this question is answered by the celebrated Nash-Williams Theorem \cite{Nashwilliams}. Before we can state the theorem, we need a definition.

 The \textit{arboricity} of a graph is the minimum number $k$ such that $G$ admits a decomposition into $k$ forests. We will say the \textit{fractional arboricity} of $G$, denoted $\Gamma_{f}(G)$, is 
\[\Gamma_{f}(G) = \max_{H \subseteq G, v(H) >1} \frac{e(H)}{v(H)-1}.\]  Now we state the Nash-Williams Theorem.

\begin{thm}[Nash-Williams, \cite{Nashwilliams}]
\label{nashwilliams}
A graph can be decomposed into $k$ forests if and only if  $\Gamma_{f}(G) \leq k$. 
\end{thm}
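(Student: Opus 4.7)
The plan is to prove both implications, with the forward direction being a short counting argument and the reverse direction requiring the bulk of the work. For necessity, suppose $E(G) = F_1 \sqcup \cdots \sqcup F_k$ with each $F_i$ a forest. For any subgraph $H$ with $v(H) \geq 2$, the edge set $F_i \cap E(H)$ is itself a forest on at most $v(H)$ vertices, so it has at most $v(H)-1$ edges. Summing over $i$ yields $e(H) \leq k(v(H)-1)$, whence $\Gamma_f(G) \leq k$.

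For the converse, my preferred approach is to invoke Edmonds' matroid union theorem applied to $k$ copies of the graphic matroid $M(G)$. Recall that the graphic matroid has independent sets equal to the forests of $G$, and its rank function is $r(A) = |V(G)| - c(V(G),A)$, where $c(V(G),A)$ is the number of connected components of the spanning subgraph with edge set $A$. A subset $E \subseteq E(G)$ is independent in the $k$-fold union $M(G) \vee \cdots \vee M(G)$ if and only if $E$ decomposes into $k$ forests, and by matroid union this is equivalent to $|A| \leq k \cdot r(A)$ for every $A \subseteq E(G)$.

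It then remains to verify the rank inequality from the hypothesis $\Gamma_f(G) \leq k$. Let $H_1, \ldots, H_m$ be the non-trivial (i.e., containing at least one edge) connected components of $(V(G), A)$. Each $H_j$ has $v(H_j) \geq 2$, so $e(H_j) \leq k(v(H_j)-1)$. Summing gives
\[
|A| = \sum_{j=1}^m e(H_j) \leq k \sum_{j=1}^m (v(H_j)-1) = k\cdot r(A),
\]
where the last equality holds because components of $(V(G),A)$ not appearing among the $H_j$'s are isolated vertices and contribute $0$ to $r(A)$.

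The main obstacle is buried inside the matroid union theorem itself, whose proof requires an augmenting-path exchange argument. If a self-contained treatment is desired, I would instead argue by extremality: among all partitions $(F_1, \ldots, F_k)$ of $E(G)$, select one maximizing $\sum_i r(F_i)$, equivalently minimizing the total number of independent cycles across the parts. If some $F_i$ still contained a cycle $C$, I would start from an edge $e \in C$ and run a breadth-first labeling of edges that could be swapped between parts, using $\Gamma_f(G) \leq k$ to show the labeling must reach an improvement, contradicting extremality. Making the labeling terminate correctly and yield a genuine increase of the objective is the delicate step; the matroid-union route packages exactly this difficulty into a black box.
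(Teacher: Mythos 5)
The paper does not prove this theorem; it is stated as a classical cited result (Nash-Williams 1964), so there is no in-paper proof to compare against. Evaluated on its own terms, your argument is correct. The necessity direction is the standard counting argument and is fine. For sufficiency, your reduction to the matroid union (matroid partition) theorem applied to $k$ copies of the graphic matroid is a clean and standard modern route: $E(G)$ is partitionable into $k$ forests iff $|A| \le k\,r(A)$ for all $A \subseteq E(G)$, where $r$ is the graphic-matroid rank. Your verification of this rank inequality from $\Gamma_f(G) \le k$ is also correct --- decomposing $(V(G),A)$ into non-trivial components $H_1,\dots,H_m$, each with $v(H_j)\ge 2$, gives $e(H_j)\le k(v(H_j)-1)$ and hence $|A|\le k\sum_j (v(H_j)-1)=k\,r(A)$, since isolated vertices contribute nothing to the rank. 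The one thing worth flagging is that this packages the real work into Edmonds' theorem, which postdates Nash-Williams' original proof; the extremal/augmenting-exchange sketch you give at the end is closer in spirit to the original direct argument, and you are right that terminating the labeling and extracting a strict improvement is the delicate part there. As a blind proof proposal, the matroid-union route is entirely adequate, and it is in fact more general (it gives the partition threshold for any matroid, not just graphic ones).
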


Given Theorem \ref{nashwilliams}, a natural question to ask is: assuming we can decompose our graph into $k$ forests, is there a decomposition into $k$ forests where one of the forests has additional structure? There has been a large amount of work in this area, most of which is focused around two conjectures: the Nine Dragon Tree Conjecture, and the Strong Nine Dragon Tree Conjecture. These conjectures were proposed by Montassier, Ossona de Mendez, Raspaund and Zhu \cite{montassier}. The Nine Dragon Tree conjecture was recently proven by Jiang and Yang \cite{ndt}, using techinques from a previous paper of Yang \cite{Yangmatching}.

\begin{thm}[Nine Dragon Tree Theorem, \cite{ndt}]
\label{ndttheorem}
Let $G$ be a graph where $\Gamma_{f}(G) \leq k + \frac{d}{d+k+1}$, then $G$ decomposes into $k+1$ forests, where one of the forests has maximum degree at most $d$. 
\end{thm}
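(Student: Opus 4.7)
The plan is to proceed by extremal counterexample plus an exchange-and-count argument, which is the template for all Nine Dragon Tree-style results. Suppose for contradiction that $G$ is a counterexample with $|E(G)|$ minimum. Since $\Gamma_f(G) \le k + \frac{d}{d+k+1} < k+1$, Theorem \ref{nashwilliams} gives a decomposition of $G$ into $k+1$ forests $F_0, F_1, \dots, F_k$. Among all such decompositions I would fix one minimizing a potential designed to penalize high degrees in $F_0$, for example the lexicographic degree sequence of $F_0$ restricted to vertices of $F_0$-degree exceeding $d$, with secondary ties broken by some measure of the forest structure around these vertices. Call a vertex \emph{heavy} if it has $F_0$-degree greater than $d$; by assumption at least one heavy vertex $v$ exists.

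The core of the argument is an augmenting/exchange procedure starting from $v$. For each neighbor $u$ of $v$ in the $F_0$-tree containing $v$, I would try to move the edge $vu$ into some $F_i$ with $i \ge 1$. If no cycle is created this strictly decreases $d_{F_0}(v)$ and would violate the extremal choice. If every such move creates a cycle in every $F_i$, then for each $i$ there is a forced edge on the $u$–$v$ path in $F_i$ that we could try to reassign next; iterating this gives a rooted tree of candidate exchanges, the \emph{exchange tree}, whose nodes are edges of $G$ and whose vertex set covers a set $U$ of vertices of $G$ containing $v$. The extremal hypothesis implies that this process never terminates in a successful swap, which in turn forces a rigid structural description of how $U$ sits inside $G$: every edge of every $F_i$ that is incident to $U$ either lies inside $G[U]$, or is blocked by a heavy vertex in $U$, or plays a specific role in the tree.

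Third, I would translate this structural closure into a density statement. Each heavy vertex in $U$ contributes at least $d+1$ edges of $F_0$ to $G[U]$. Each of the other $k$ forests $F_i$ restricted to $U$ is a forest and contributes at most $|U|-1$ edges, but the exchange constraints force near-equality for forests reached by the exchange tree. Balancing the count of edges of $F_0$ saved by heavy vertices against the $k$ potential deficits from the other forests should give
\[
e(G[U]) \;>\; \Bigl(k + \frac{d}{d+k+1}\Bigr)(|U|-1),
\]
contradicting $\Gamma_f(G) \le k + \frac{d}{d+k+1}$.

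The main obstacle is the third step: the extremal choice and the exchange tree must be tuned so that the closure conditions give exactly the density threshold $k + \frac{d}{d+k+1}$. In particular, I expect that a naive single-level exchange only yields a weaker bound (something like $k + \frac{d}{2d+k}$ or so), and one really needs to exploit multiple layers of the exchange tree, track when an exchange is blocked by the heavy vertex $v$ itself versus another heavy vertex, and perform a discharging-style redistribution of the "deficit" $\frac{d}{d+k+1}$ across heavy vertices and edges on the paths $P_i$ simultaneously. Getting this balance right is precisely the delicate combinatorial content that makes the theorem nontrivial, and it is where I would expect to spend most of the effort.
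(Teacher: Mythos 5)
Note first that the paper does not prove Theorem~\ref{ndttheorem}; it is quoted as a result of Jiang and Yang \cite{ndt}, and the paper's own contribution is the pseudoforest analogue Theorem~\ref{ourresult2}, proved in Section~2. So there is no in-paper proof to compare against directly, but the paper's argument for Theorem~\ref{ourresult2} uses essentially the Jiang--Yang machinery and is a fair proxy.

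Against that machinery, your proposal identifies the correct high-level template---minimal counterexample, Nash--Williams to get $k+1$ forests, an extremal choice of decomposition, an exchange step, and a density contradiction---but it is a sketch that stops exactly at the hard step, as you yourself acknowledge. There are two concrete gaps. The first is the potential. Minimizing the lexicographic degree sequence of $F_0$ at heavy vertices is not the invariant that drives the argument. The published proofs, and this paper's pseudoforest version, start not from an arbitrary forest decomposition but from an orientation with tightly controlled out-degrees, derive a red-blue arc colouring from it, and then minimize (in order) the number of red cycles, a residue function $\sum_K \max\{e(K)-d,0\}$ over red components, and a lexicographic order on the \emph{edge counts of the red components} in a rooted ``legal order.'' Control of vertex degrees in $F_0$ does not by itself control component sizes, and the threshold $\frac{d}{d+k+1}$ ultimately emerges as a per-vertex average over red components (small components must be paired with large parents), not as a bound on the degree of a single heavy vertex. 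The second gap is the exchange. Your ``exchange tree'' of candidate edge moves is left as a black box, and the closure properties you invoke (``forces a rigid structural description,'' ``near-equality for forests reached by the exchange tree'') are assertions, not deductions. In the actual proofs the exchange is a specific flip along a blue arc together with a red edge (or a directed blue path followed by a red path), and the consequences of flips being blocked are captured by parent/child relations among components (the analogues of Lemmas~\ref{troublesomedonttroublesome} and \ref{boundedchildren} here), not by a tree of blocked edges rooted at $v$. Without pinning both of these down, the claimed final inequality $e(G[U]) > (k+\frac{d}{d+k+1})(|U|-1)$ is a goal rather than a conclusion. The plan is directionally sound, but the missing content is precisely the delicate combinatorial core of the theorem.
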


\begin{conj}[Strong Nine Dragon Tree Conjecture]
\label{SNDT}
Let $G$ be a graph where $\Gamma_{f}(G) \leq k+ \frac{d}{d+k+1}$, then $G$ decomposes into $k+1$ forests where one of the forests has the property that every connected component has at most $d$ edges. 
\end{conj}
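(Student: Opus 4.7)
The plan is a minimum-counterexample argument driven by a potential-function and exchange scheme on forest decompositions, in the spirit of Jiang and Yang's proof of the Nine Dragon Tree Theorem (Theorem \ref{ndttheorem}). Let $G$ be a counterexample to Conjecture \ref{SNDT} minimizing $v(G) + e(G)$. By Theorem \ref{nashwilliams}, since $\Gamma_{f}(G) \leq k+1$, the graph $G$ admits decompositions into $k+1$ forests. Among all such decompositions $\mathcal{F} = (F_1,\dots,F_k,F)$, I would select one minimizing a strictly convex potential $\Phi(\mathcal{F}) = \sum_{T} \psi(e(T))$, summed over the components $T$ of $F$; for instance, $\psi(m) = \binom{m+1}{2}$. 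Strict convexity ensures that removing an edge from a large component is ``more valuable'' than avoiding the formation of a slightly smaller one, which is the first lever for controlling component sizes.

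Since $G$ is a counterexample, $F$ contains some component $T_0$ with $e(T_0) > d$. I would then construct an \emph{augmenting exploration}: a partial BFS through the decomposition. Declare an edge $uv \in F_i$ (with $i \leq k$) reachable from $T_0$ whenever swapping $uv$ into $F$ and performing a compensating swap (removing some edge of the resulting cycle in $F_i$, and cascading through the other $F_j$'s) would produce a decomposition of lower or equal $\Phi$. Iterating the exploration to closure produces a vertex set $R \subseteq V(G)$ together with a collection of edges in $G[R]$ certified to be present by the fact that $\mathcal{F}$ is $\Phi$-minimal.

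The main step is a density contradiction on $G[R]$. On one hand, the minimality of $\Phi$ over $\mathcal{F}$ forces a strong lower bound on $e(G[R])$, because every vertex of $R$ must see enough edges in $F$ and in each $F_i$ to block further improving swaps, and closure of the exploration means no augmenting sequence leaves $R$. On the other hand, the hypothesis $\Gamma_{f}(G) \leq k + \frac{d}{d+k+1}$ caps $e(G[R])$ above by $\bigl(k + \frac{d}{d+k+1}\bigr)(|R|-1)$. The goal is to arrange the exploration so that the combinatorial lower bound strictly exceeds this upper bound, contradicting the existence of $G$.

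The hard part is exactly this density step. The Nine Dragon Tree Theorem controls maximum degree, which is a vertex-local invariant: the Jiang-Yang argument can blame a single vertex. Component size is global, and a single swap can merge two almost-full components of $F$ into a catastrophic one, so the legality of swaps in the augmenting scheme must be very finely calibrated, and the potential $\psi$ must simultaneously (i) discourage merges and (ii) leave enough room for swaps to close. Striking this balance is where all partial progress on Conjecture \ref{SNDT}, including (as the abstract indicates) the pseudoforest relaxation established in this paper, has stopped short of the sharp threshold $k + \frac{d}{d+k+1}$. I would expect a full proof to require a genuinely new ingredient beyond a convex per-component potential, perhaps a joint optimization over decomposition and rooting, or a global charge argument accumulating across many components rather than discharging within one.
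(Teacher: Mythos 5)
The statement you were asked to prove is Conjecture~\ref{SNDT}, the Strong Nine Dragon Tree Conjecture, and the paper does \emph{not} prove it: it remains open. The paper records only that it holds when $d=1$ (where it coincides with the Nine Dragon Tree Theorem~\ref{ndttheorem}) and when $k=1$, $d=2$ (by \cite{kostochkaetal}); the paper's actual contribution, Theorem~\ref{ourresult2}, is a pseudoforest analogue under a maximum-average-degree hypothesis and only for $d \leq 2k+2$. So there is no proof in the paper to compare against, and your proposal, which you yourself describe as a plan culminating in an unresolved ``density step,'' does not close the gap either. Submitting a strategy sketch with an acknowledged missing core is not a proof, and for an open conjecture that is the honest thing to say.

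That said, your outline is in the right family, and it is worth flagging two places where your specifics diverge from what actually works in the paper's pseudoforest setting and in Jiang--Yang's proof of Theorem~\ref{ndttheorem}. First, the potential there is not strictly convex like $\psi(m) = \binom{m+1}{2}$; the paper minimizes the residue $\rho(F) = \sum_{K} \max\{e(K)-d,\,0\}$, which is piecewise linear and penalizes only overflow past $d$. Strict convexity penalizes every merge, which over-constrains the exchange scheme: several flips in Lemmas~\ref{troublesomedonttroublesome} and~\ref{boundedchildren} are legal precisely because they leave $\rho$ unchanged while reshaping components. Second, a single scalar potential is not enough; the paper uses a secondary lexicographic tie-break (the legal order of red components) and the crucial moves decrease that order rather than $\rho$. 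Finally, the pseudoforest proof leans heavily on Hakimi-type orientations and the out-degree-one structure of a pseudoforest, which give clean ``flips'' and a canonical exploration region $H_{f,D,R}$ whose blue-edge density is exactly $k$; there is no comparably rigid orientation structure for forests, and that is a genuine additional obstruction, consistent with your closing assessment that a new ingredient is needed.
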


When $d =1$, the Strong Nine Dragon Tree Conjecture is equivalent to the Nine Dragon Tree Theorem, and hence the Strong Nine Dragon Tree Conjecture is true for any $k \in \mathbb{Z}$ when $d =1$. Kim, Kostochka, West, Wu, and Zhu \cite{kostochkaetal} showed that the Strong Nine Dragon Tree Conjecture is true when $k=1$ and $d=2$. In \cite{montassier}, it was shown that the fractional arboricity bound given in the Nine Dragon Tree Theorem is best possible (and hence also best possible for the Strong Nine Dragon Tree Conjecture). However, we note that in \cite{ndtk<=2}, Chen, Kim, Kostochka, West and Zhu prove a strengthening of the Nine Dragon Tree Theorem when $k \leq 2$ with a weaker fractional arboricity bound but with an extra structural hypothesis to ensure that the graph has a decomposition into $k+1$ forests. 

We are interested in proving similar theorems to this for pseudoforests.
Hakimi's Theorem \cite{hakimithm} is to pseudoforests as Nash-Williams Theorem is to forests, where the maximum average degree parameter arises instead of fractional arboricity. Recall, the \textit{maximum average degree} of $G$ is
\[\text{mad}(G) = \max_{H \subseteq G} \frac{2e(H)}{v(H)}.\]
Observe that a graph is a pseudoforest if and only if the graph admits an orientation such that each vertex has maximum out-degree $1$. Hakimi's Theorem asserts:

\begin{thm}[Hakimi's Theorem \cite{hakimithm}]
\label{hakimi}
A graph $G$ admits an orientation such that $\Delta^{+}(G) \leq k$ if and only if $\text{mad}(G) \leq 2k$.
\end{thm}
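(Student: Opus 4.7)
The plan is to prove both directions separately, with the reverse direction being the substantive one. For the easy direction ($\Rightarrow$), given an orientation of $G$ with $\Delta^+(G) \leq k$ and any subgraph $H \subseteq G$, restricting the orientation to $H$ gives $e(H) = \sum_{v \in V(H)} d_H^+(v) \leq \sum_{v \in V(H)} d_G^+(v) \leq k \cdot v(H)$, so $2e(H)/v(H) \leq 2k$, and hence $\text{mad}(G) \leq 2k$.

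For the reverse direction ($\Leftarrow$), I would use a potential-function exchange argument. Assuming $\text{mad}(G) \leq 2k$, pick an orientation of $G$ minimizing $\Phi = \sum_{v \in V(G)} d^+(v)^2$. Suppose for contradiction that some vertex $v$ has $d^+(v) \geq k+1$, and let $S$ be the set of all vertices reachable from $v$ by a directed path, with $v \in S$. I claim every $u \in S$ satisfies $d^+(u) \geq d^+(v) - 1 \geq k$: otherwise, taking a directed $v$-$u$ path and reversing every arc along it decreases $d^+(v)$ by one, increases $d^+(u)$ by one, and leaves every other out-degree unchanged. The resulting change in $\Phi$ equals $2(d^+(u) - d^+(v)) + 2$, which is strictly negative whenever $d^+(u) \leq d^+(v) - 2$, contradicting the choice of orientation.

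To finish, observe that $S$ is closed under out-arcs by definition of reachability: if $u \in S$ and $u \to w$ is an arc, then $w$ is also reachable from $v$, so $w \in S$. Consequently $e(G[S]) = \sum_{u \in S} d^+(u)$, and combining the claim with the fact that $v \in S$ contributes at least $k+1$ while every other vertex of $S$ contributes at least $k$ yields $e(G[S]) \geq k|S| + 1$. Thus $2e(G[S])/|S| > 2k$, contradicting $\text{mad}(G) \leq 2k$. The main subtlety is producing a \emph{strict} inequality at the final step; the key point that makes it work is that reachability from $v$ localizes its excess out-degree to an induced subgraph whose edges are entirely accounted for by the out-degrees of its own vertices, and the minimality of $\Phi$ forces each of those out-degrees to be at least $k$.
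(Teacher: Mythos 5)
The paper cites Hakimi's Theorem from the literature and gives no proof of its own, so there is nothing to compare against; I will simply evaluate your argument on its merits.

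Your proof is correct. The forward direction is the routine counting: for any orientation with $\Delta^+ \leq k$ and any $H \subseteq G$, every edge of $H$ is counted once by the out-degree (within $H$) of its tail, so $e(H) = \sum_{v \in V(H)} d^+_H(v) \leq k\,v(H)$. The reverse direction is a clean instance of the standard path-reversal argument: minimizing $\Phi = \sum_v d^+(v)^2$ over orientations, and observing that reversing a directed $v$--$u$ path changes $\Phi$ by exactly $2\bigl(d^+(u) - d^+(v)\bigr) + 2$ (the intermediate vertices' out-degrees are unaffected), forces every vertex $u$ reachable from a vertex $v$ of out-degree at least $k+1$ to have out-degree at least $k$. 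The key structural point — that the reachable set $S$ is closed under out-arcs, so $e(G[S]) = \sum_{u \in S} d^+(u)$ — is exactly what turns the local excess at $v$ into the strict inequality $e(G[S]) \geq k|S| + 1$, contradicting $\text{mad}(G) \leq 2k$. One could equivalently phrase the extremal choice as minimizing the maximum out-degree and, among those, the number of vertices attaining it, but the $\sum d^+(v)^2$ potential is tidier and your bookkeeping of it is exact. No gaps.
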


Hence we immediately obtain:

\begin{cor}
If $\text{mad}(G) \leq 2k$, then $G$ decomposes into $k$ pseudoforests.
\end{cor}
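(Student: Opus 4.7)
The proof strategy is to invoke Hakimi's Theorem (Theorem~\ref{hakimi}) and then distribute the out-edges at each vertex among $k$ colour classes. Since $\text{mad}(G) \leq 2k$, Hakimi's Theorem immediately yields an orientation of $G$ with $\Delta^{+}(G) \leq k$. Fix such an orientation. At each vertex $v$, the at most $k$ arcs leaving $v$ can be assigned pairwise distinct labels from $\{1,2,\ldots,k\}$; the labelling decisions at different vertices are independent.

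For each $i \in \{1,\ldots,k\}$, let $G_i$ be the spanning subgraph of $G$ whose edge set consists of the edges labelled $i$, equipped with the inherited orientation. By construction, every vertex of $G_i$ has out-degree at most $1$. Summing out-degrees shows that each connected component of $G_i$ has at most as many edges as vertices, hence contains at most one cycle, so $G_i$ is a pseudoforest. Since the label classes partition $E(G)$, the collection $\{G_1, \ldots, G_k\}$ is the desired decomposition into $k$ pseudoforests.

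The argument presents no real obstacle: the only substantive input is Hakimi's Theorem, and the labelling step is purely local. The characterisation of pseudoforests as precisely those graphs admitting an orientation of maximum out-degree at most $1$ is used implicitly in the final step, but follows from the elementary edge-counting observation above. This transparent deduction is also the reason that the interesting direction of the present work lies in strengthening the corollary by demanding additional structural constraints on one of the pseudoforests, in parallel with the Strong Nine Dragon Tree Conjecture for forests.
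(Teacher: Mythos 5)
Your proof is correct and matches the paper's intended argument exactly: the paper explicitly notes that pseudoforests are precisely the graphs admitting an orientation with out-degree at most $1$, applies Hakimi's Theorem to obtain an orientation with $\Delta^{+}(G) \leq k$, and then the deduction is immediate by distributing the at most $k$ outgoing arcs at each vertex into $k$ colour classes, just as you describe. The paper calls this step immediate and does not spell it out, but your write-up simply makes explicit what the paper leaves implicit.
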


With this theorem, one might hope to prove a conjecture analagous to the Nine Dragon Tree conjecture for pseudoforests. This was done by Fan, Li, Song, and Yang \cite{pseudoforest}, who showed that:

\begin{thm}[\cite{pseudoforest}]
\label{pseudoforestndt}
If $\text{mad}(G) \leq 2k + \frac{2d}{d+k+1}$, then $G$ decomposes into $k+1$ pseudoforests, where one of the pseudoforests has the property that every connected component has maximum degree $d$. 
\end{thm}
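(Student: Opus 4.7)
The plan is to argue by a minimum edge counterexample combined with a carefully chosen pseudoforest decomposition. Suppose for contradiction that $G$ is an edge-minimal counterexample. Since $\text{mad}(G) \leq 2k + \frac{2d}{d+k+1} \leq 2(k+1)$, Hakimi's theorem furnishes a decomposition of $E(G)$ into $k+1$ pseudoforests $(F_1, \ldots, F_{k+1})$. Among all such decompositions, I would choose one that lexicographically minimizes the sequence of values $d_{F_{k+1}}(v)$ over vertices with $d_{F_{k+1}}(v) > d$, sorted in decreasing order, with ties broken by minimizing $\sum_v d_{F_{k+1}}(v)^2$. Because $G$ is a counterexample, some vertex $v_0$ satisfies $d_{F_{k+1}}(v_0) \geq d+1$.

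The next step is to reformulate the problem via orientations: orient each $F_i$ so that every vertex has out-degree at most one (tree components point toward a chosen root, unicyclic components orient the unique cycle as a directed cycle with trees hanging off pointing inward). An edge $xy$ coloured $k+1$ and oriented $(x,y)$ can legally be moved to colour $i \leq k$ provided $x$ has no out-arc of colour $i$; otherwise there is a \emph{blocking} out-arc $(x, x')$ of colour $i$. I would then run a BFS-style exploration from $v_0$, maintaining a set $A$ of vertices reachable via chains of potential swaps with the property that, if any exploration step genuinely reduces the potential, we obtain a contradiction with minimality. At every vertex $u \in A$ and every colour $i \leq k$, either some chain of swaps rooted at $u$ succeeds and contradicts the minimal choice, or the blocking out-neighbour is added to $A$.

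Once the exploration saturates, the closure $A$ together with the associated swap structure supports a subgraph $H$ whose edge count, forced by the saturated blocking, satisfies roughly $e(H) \geq k \cdot v(H) + d$ while $v(H) \leq d+k+1$. This yields $2e(H)/v(H) > 2k + \frac{2d}{d+k+1}$, contradicting the hypothesis on $\text{mad}(G)$. The denominator $d+k+1$ arises naturally as the size of the extremal \emph{dragon} configuration: a single vertex with $d$ neighbours in $F_{k+1}$ together with $k$ additional pseudoforests of full out-degree supported on these $d+1$ vertices.

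The main obstacle is that pseudoforests, unlike forests, may contain a cycle, and recolouring an edge can either create an unintended second cycle in the destination pseudoforest or destroy a cycle in the source in a way that forces reorientation upstream. Carrying out the swap chains while preserving the pseudoforest property for every colour requires a careful case analysis on whether the endpoints of a moved edge lie on cycles of their respective components, and whether a proposed swap merges two cyclic components. Making this bookkeeping rigorous, and showing that the final counting still yields the tight bound $\frac{2d}{d+k+1}$ rather than something weaker, is the delicate technical heart of the argument.
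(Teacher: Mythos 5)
Note first that the paper does not prove Theorem~\ref{pseudoforestndt}; it is cited from \cite{pseudoforest}, and Section~2 of this paper develops parallel machinery only for the component-size variant (Theorem~\ref{ourresult2}). Judged against that machinery and against the statement itself, your proposal contains one genuine gap and a secondary mismatch in the reduction.

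The gap is the assertion $v(H) \leq d+k+1$. Nothing in your exploration procedure caps the size of the closure $A$: if $G$ is assembled by chaining many tight ``dragon'' gadgets together, the BFS from $v_0$ propagates through all of them and the closure can be made arbitrarily large, while $\text{mad}(G)$ stays at the threshold. The contradiction therefore cannot come from bounding $v(H)$; it must come from showing the (possibly large) closed subgraph $H$ has high \emph{density}. In the paper's approach this is done in two steps: first arrange that every vertex of $H$ contributes exactly $k$ blue out-arcs staying inside $H$, so $e_b(H)/v(H)=k$ with equality; then establish $e_r(H)/v(H) > d/(d+k+1)$ by a global averaging/pairing argument over red components. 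Your sketch has no analogue of either step, and even granting both of your inequalities, the arithmetic $e(H)\ge kv(H)+d$ and $v(H)\le d+k+1$ gives only $\ge$, not the strict inequality you assert.

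The secondary mismatch is the starting reduction. You take an \emph{edge}-minimal counterexample and invoke Hakimi directly, but a plain Hakimi decomposition allows vertices of total out-degree less than $k$, in which case one only gets $e_b(H)/v(H)\le k$, which is the wrong direction for a lower density bound. The exact equality $e_b(H)=k\cdot v(H)$ needs the refined orientation of Lemma~\ref{orientationlemma} (out-degree in $\{k,k+1\}$ at every vertex), and the proof of that refinement in \cite{pseudoforest} relies on \emph{vertex}-minimality (delete a vertex, orient the remainder by induction, reinsert). Your choice of a degree-based lexicographic potential is the right kind of target for the maximum-degree conclusion, but both the minimality condition and the size bound on $H$ need to be replaced before the counting can close.
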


In \cite{pseudoforest}, it was shown that the maximum average degree bound in Theorem \ref{pseudoforestndt} is tight. Analogously to the Strong Nine Dragon conjecture, one can postulate the following:

\begin{conj}
\label{strongpndt}
If $\text{mad}(G) \leq 2k + \frac{2d}{d+k+1}$, then $G$ decomposes into $k+1$ pseudoforests, where one of the pseudoforests has the property that every connected component has at most $d$ edges. 
\end{conj}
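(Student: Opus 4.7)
The plan is to follow the exchange-argument framework of Jiang--Yang for the Nine Dragon Tree Theorem, adapted by Fan--Li--Song--Yang to pseudoforests (Theorem \ref{pseudoforestndt}) but strengthened to bound component \emph{sizes} rather than maximum degrees. Let $G$ be a minimum counterexample (minimum in $v(G) + e(G)$), and among all decompositions $(F, P_1, \ldots, P_k)$ of $G$ into $k+1$ pseudoforests, pick one that lexicographically minimizes: (i) the number of components of $F$ having more than $d$ edges, called \emph{bad} components; (ii) the total number of edges in bad components; and (iii) an auxiliary quantity defined from fixed orientations of $P_1, \ldots, P_k$ in which each vertex has out-degree at most one (guaranteed by Theorem \ref{hakimi}).

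The heart of the argument is a reachability procedure. Fix a bad component $C_0$ of $F$ and grow a set $\mathcal{R}$ of components of $F$, starting with $\{C_0\}$, by the rule: a component $C'$ of $F$ is pulled into $\mathcal{R}$ whenever some edge $e$ incident to a component already in $\mathcal{R}$ admits a swap into some $P_i$ such that a compensating move from $P_i$ back to $F$ yields a decomposition contradicting the lexicographic choice. Concretely, for each edge $uv$ with $u$ in $U := \bigcup_{C \in \mathcal{R}} V(C)$ and each $i \in \{1, \ldots, k\}$, either $P_i + uv$ is not a pseudoforest --- so the pseudotree component of $P_i$ meeting $\{u,v\}$ already carries a cycle in an obstructing way --- or the swap propagates the reachable set forward. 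When $\mathcal{R}$ stabilizes, every out-edge (in the chosen orientation) from each vertex $u \in U$ in every $P_i$ lands back in $U$, and every edge of $F$ incident to $U$ lies inside a bad component that is itself in $\mathcal{R}$.

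The final step is a counting contradiction on $G[U]$. Each bad component of $F$ contributes at least $d+1$ edges to $F[U]$, and each $P_i$ contributes at least $|U|$ edges modulo a correction for the tree-rooted components of $P_i$ meeting $U$. Combined with the hypothesis on $\text{mad}(G)$, this will yield
\[2 \, e(G[U]) \;\geq\; \left( 2k + \frac{2d}{d+k+1} \right) |U| + \varepsilon\]
for some $\varepsilon > 0$, contradicting the definition of $\text{mad}(G)$ applied to $G[U]$. The restriction $d \leq 2k+2$ is exactly what is needed so that the surplus of $(d+1) - d \cdot \tfrac{d+1}{d+k+1}$ edges contributed per bad component dominates the defect terms from pseudoforest tree-roots and from cycles inside $U$.

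The principal obstacle is the cycles in the pseudoforests $P_i$: unlike in the forest case, a swap of an edge into $P_i$ can be blocked by a pre-existing cycle rather than merely by a path, and exchange sequences must be traced through functional digraphs that contain both trees and cycles. Bookkeeping this carefully --- especially tracking vertices that sit simultaneously on a $P_i$-cycle and in a bad component of $F$, whose effective ``capacity'' for accepting a new edge is different from that of tree vertices --- is what I expect to drive the condition $d \leq 2k+2$, and extending beyond this bound would seem to require a finer analysis of how the reachability procedure interacts with such doubly-constrained vertices.
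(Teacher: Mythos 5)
Your proposal follows the same broad framework as the paper (minimal counterexample, exchange/flip argument, reachability closure from a root bad component, final density counting), but it contains a central gap that the paper spends most of its effort closing. You assert that once $\mathcal{R}$ stabilizes, ``every edge of $F$ incident to $U$ lies inside a bad component that is itself in $\mathcal{R}$,'' i.e.\ every red component met by $U$ has more than $d$ edges. If that were true, the density contradiction would be almost immediate (a tree with $>d$ edges already has edge--vertex ratio exceeding $\tfrac{d}{d+k+1}$ for any $k\ge 1$, $d\le 2k+2$), and the restriction $d\le 2k+2$ would be unnecessary. But nothing in the reachability rule forces the pulled-in components to be large; in fact the dangerous objects are precisely the \emph{small} components of $F$ inside $U$ (single vertices and $K_2$'s, the paper's ``troublesome'' components), which add vertices to $U$ while contributing almost no red edges. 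The paper's Lemmas~\ref{troublesomedonttroublesome}, \ref{boundedchildren}, and the pairing Lemma~\ref{pairinglemma} exist exactly to control these: a troublesome component must hang off a non-troublesome parent (by a flip making the legal order smaller otherwise), a parent can feed at most $k$ troublesome children, and the merged object $T_C$ is still non-troublesome. Without a pairing mechanism of this type your counting step does not close, and this is where $d\le 2k+2$ actually enters (via the inequality $\tfrac{d+k-1}{d+2k}\ge\tfrac{d}{d+k+1}$ in Case~2 of the pairing lemma), not where you conjectured it would.

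Two further ingredients you omit, both of which the paper leans on heavily. First, the paper does not minimize over all pseudoforest decompositions but only over those arising from an out-degree-balanced orientation (Lemma~\ref{orientationlemma}: $d^+(v)\in\{k,k+1\}$ for every $v$), which makes the blue contribution to $H_{f,D,R}$ exactly $k$ edges per vertex and turns the counting into a purely red-edge density argument; your ``modulo a correction for the tree-rooted components of $P_i$'' correction terms disappear under this normalization. Second, the paper's top minimality condition is the number of red cycles, not the number or size of bad components; this is what gives Observation~\ref{redcyclesaturationlemma} (a vertex sending a blue arc to a tree component is not itself on a red cycle), which in turn is what makes every flip in Lemmas~\ref{troublesomedonttroublesome} and~\ref{boundedchildren} legal. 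The ``doubly-constrained vertices on a $P_i$-cycle and in a bad $F$-component'' that you flag as the principal obstacle are sidestepped by this choice of minimality conditions rather than tracked through the exchange argument.
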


By Theorem \ref{pseudoforestndt}, Conjecture \ref{strongpndt} is true when $d=1$ for all $k \in \mathbb{N}$. Interestingly, the proof of the Strong Nine Dragon Tree Theorem in \cite{kostochkaetal} implies Conjecture \ref{strongpndt} when $k=1$ and $d=2$. We show the following theorem:
\begin{thm}
\label{ourresult2}
Conjecture \ref{strongpndt} is true for all tuples $(k,d)$ where $k \in \mathbb{N}$ and $d \leq 2k+2$. 
\end{thm}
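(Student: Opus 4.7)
The plan is to proceed by contradiction. Let $G$ be a counterexample to Conjecture~\ref{strongpndt} with $d \le 2k+2$ that minimizes $|V(G)| + |E(G)|$. By Hakimi's theorem (Theorem~\ref{hakimi}) and the hypothesis $\text{mad}(G) \le 2k + \frac{2d}{d+k+1}$, the graph $G$ decomposes into $k+1$ pseudoforests; the failure is only in the component-size structure. Standard minimality arguments, applied to proper subgraphs of $G$, will yield a minimum-degree lower bound, force $G$ to be $2$-connected, and show the $\text{mad}$ inequality is essentially tight on $V(G)$.

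The second step is to choose a distinguished decomposition. Among all decompositions $(F_1, \ldots, F_{k+1})$ of $G$ into $k+1$ pseudoforests, I would select one minimizing a carefully ordered tuple: first, the number of \emph{bad} components of $F_{k+1}$ (components with more than $d$ edges); then the total number of edges in bad components; and then further tiebreakers designed to control the interaction of bad components with $F_1, \ldots, F_k$ (for instance, the orientation structure of $F_{k+1}$ on vertices incident to bad components). Since $G$ is a counterexample, every such optimal decomposition contains a bad component $C$ with $e(C) \ge d+1$.

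The third and most substantial step is an exchange analysis, the pseudoforest analogue of arguments used for the Nine Dragon Tree and Strong Nine Dragon Tree problems. For each edge $e \in E(C)$ and each $i \in \{1, \ldots, k\}$, attempting to swap $e$ from $F_{k+1}$ into $F_i$, possibly with compensating local changes in $F_i$ to preserve the pseudoforest condition, must fail to improve the chosen potential. This yields restrictive structural information: either $F_i + e$ contains a forbidden bicycle at an endpoint of $e$, or $F_{k+1} - e$ still contains a bad component. Iterating, I would define a closure set $U \subseteq V(G)$ containing $V(C)$ and, inductively, every vertex forced into a bad component of $F_{k+1}$ by the exchange relation.

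The final step is the density computation on $U$, which is where the hypothesis $d \le 2k+2$ enters. The exchange-forced structure should imply that each of $F_1, \ldots, F_k$ restricted to $U$ behaves like a spanning pseudoforest of $G[U]$, contributing roughly $|U|$ edges, while the bad components of $F_{k+1}$ inside $U$ each contribute at least $d+1$ edges while spanning at most $d+2$ vertices. A direct calculation then shows $2|E(G[U])|/|U|$ exceeds $2k + \frac{2d}{d+k+1}$, contradicting the $\text{mad}$ hypothesis. The main obstacle will be this density computation: one must show the closure $U$ captures sufficiently many forced edges, and the counting becomes tight at $d = 2k+2$. For $d > 2k+2$ the same style of argument is expected to break down, because a bad component of size $d+1$ no longer has enough ``pseudoforest deficit'' to be forced by local exchanges, which is presumably why the remaining regime requires new ideas.
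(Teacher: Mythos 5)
There are genuine gaps in this proposal, concentrated in steps two through four. The paper does not optimize over arbitrary decompositions into $k+1$ pseudoforests: it first invokes an orientation lemma (inherited from Fan, Li, Song, and Yang) stating that a vertex-minimal counterexample admits an orientation in which every vertex has out-degree $k$ or $k+1$, and then defines a red-blue colouring of arcs so that each vertex has exactly $k$ blue out-arcs and at most one red out-arc, yielding $k$ blue pseudoforests and one red pseudoforest $F$. This orientation structure is what makes the rest go. The analogue of your exchange is a \emph{flip} that trades a blue arc $(x,y)$ for a red arc $xv$ sharing the tail $x$, reversing a directed red path so out-degrees stay in $\{k,k+1\}$; it is not a generic ``move $e$ from $F_{k+1}$ into $F_i$ with local repairs.'' Likewise, the closure $U$ must be taken as the set of vertices reachable from a fixed heavy red component along blue arcs in the \emph{forward} direction (plus red edges), which guarantees $e_b(U)/|U| = k$ exactly, not merely ``roughly $|U|$ edges.'' Without the orientation framework you have neither the exchange rule nor the tight blue count, and the tiebreakers you gesture at do not substitute for the paper's triple: minimize red cycles, then a residue function $\sum_K \max\{e(K)-d,0\}$, then the lexicographic order of a legal ordering of the red components of $U$.

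The density computation is also not where you place it. The obstruction is not that bad components of $F$ are dense; it is that the closure $U$ inevitably pulls in many \emph{small} red components (isolated vertices or single red edges, called ``troublesome'') whose presence drags the red density of $U$ below $d/(d+k+1)$. The core lemmas show that a troublesome component cannot have a troublesome child, and that a non-troublesome component has at most $k$ troublesome children; grouping each non-troublesome component with its troublesome children then yields red density at least $d/(d+k+1)$ on every group, strictly more on the group containing the root, which gives the contradiction. The hypothesis $d \le 2k+2$ enters exactly here: it gives $\frac{d}{d+k+1} \le \frac{2}{3}$, which forces any troublesome tree to have at most one edge, making the pairing argument work. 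Your stated explanation for where $d \le 2k+2$ is used (``a bad component of size $d+1$ no longer has enough pseudoforest deficit'') does not match the actual mechanism, and the proposal's density calculation, as written, never accounts for the troublesome components and so would not close.
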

 As with the Strong Nine Dragon Tree Theorem, our proof is heavily influenced by Theorem \ref{pseudoforestndt}, and by Theorem \ref{ndttheorem}.

 As an overview of the paper, Section 2 proves Theorem \ref{ourresult2}. A high level overview of the proof is the following. The proof is by contradiction, where we will take a counterexample subject to many minimality conditions. We start with a decomposition into $k+1$ pseudoforests, $(C_{1},\ldots,C_{k},F)$, where we think of $F$ as the special pseudoforest which we want to have small component sizes (we will pick this starting decomposition in a particular way, it will not be an arbitrary decomposition). As we are working with a counterexample, we can assume that $F$ has a component with more than $d$ edges. For $i \in \{1,\ldots,k\}$, we colour the edges of $C_{i}$ blue, and we colour the edges of $F$ red. We then pick a component of $F$ which has more than $d$ edges and designate it as a root component (subject to some minimality conditions). Using the fact that pseudoforests admit a natural orientation, we can orient $C_{1},\ldots,C_{k}$ and use this orientation to order the red components of $F$ (and we will pick this ordering subject to some conditions). The rest of the proof is to show that in this ordering, if a component of $F$ has few edges, it must be paired up with components of $F$ which have a large number of edges, else we could perform a ``flip" which would contract our choice of counterexample. This pairing will show that we must have maximum average degree larger than $2k + \frac{2d}{k+d+1}$, a contradiction.

\section{Decomposing sparse graphs into pseudoforests}

We split the proof up into four parts. The first part is dedicated to describing how we will choose our counterexample. The second part describes a flipping operation which we will use to try and ``improve" our counterexample. The third part describes the key structural properties of the counterexample. The final part is a counting argument which shows that assuming we could not improve our counterexample, there exists a subgraph with average degree larger than the bound in Theorem \ref{ourresult2}.

\subsection{Picking the minimal counterexample} \mbox{}
Fix $k \in \mathbb{N}$, and $d \leq 2k+2$. We will assume $d \geq 2$, as the case where $d =1$ is already known. Suppose $G$ is a vertex minimal counterexample to Theorem \ref{ourresult2} for those values of $k$ and $d$. 
 
In the proof of Theorem \ref{pseudoforestndt}, the first step was to show that vertex minimal counterexamples admit orientations where the difference between each vertices out-degree differs by at most one, and each vertex has out-degree at least $k$. The existence of these orientations also carries over to minimal counterexamples for Theorem \ref{ourresult2}, by the exact same proof (despite the slightly different hypothesis). We omit the proof.  

\begin{lemma}[\cite{pseudoforest}]
\label{orientationlemma}
If $G$ is a vertex minimal counterexample to Theorem \ref{ourresult2}, then there exists an orientation of $G$ such that for all $v \in V(G)$, we have $k \leq d^{+}(v) \leq k+1$. 
\end{lemma}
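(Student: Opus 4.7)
The plan is a standard potential-function argument on orientations, combined with vertex-minimality. First, since $d \leq 2k+2$ gives $\text{mad}(G) \leq 2k + \frac{2d}{d+k+1} < 2(k+1)$, Hakimi's Theorem (Theorem \ref{hakimi}) produces an orientation $\vec G$ with $\Delta^+(\vec G) \leq k+1$. This immediately establishes the upper bound in the lemma, so all the content is in forcing the lower bound $d^+(v) \geq k$ everywhere.

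Among all orientations of $G$ with $\Delta^+ \leq k+1$, I would choose one minimizing the deficit potential
\[
\Phi(\vec G) = \sum_{v \in V(G)} \max(0,\, k - d^+(v)).
\]
Suppose for contradiction that some vertex $v$ has $d^+(v) \leq k-1$. Let $S$ be the set of vertices from which $v$ is reachable by a directed path in $\vec G$ (so $v \in S$, and by definition no arc enters $S$ from $V \setminus S$). The key local move is: if some $u \in S$ has $d^+(u) \geq k+1$, then reversing every arc along a directed $u$-$v$ path keeps $\Delta^+ \leq k+1$ (intermediate out-degrees are unchanged, $d^+(u)$ drops by one to at most $k+1$, and $d^+(v)$ rises by one to at most $k$) but strictly decreases $\Phi$, contradicting the choice of orientation. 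Hence every vertex in $S$ has $d^+(u) \leq k$.

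Since all out-arcs from $S$ either stay in $S$ or leave it, counting gives $|E(G[S])| \leq \sum_{u \in S} d^+(u) \leq k|S| - 1$, and the same bound applied to any $T \subseteq S$ yields $\text{mad}(G[S]) \leq 2k$. If $S = V(G)$, then $\text{mad}(G) \leq 2k$, Hakimi decomposes $G$ into $k$ pseudoforests, and appending an empty pseudoforest (whose components have $0 \leq d$ edges) contradicts $G$ being a counterexample.

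The main obstacle is the case $S \subsetneq V(G)$, where vertex-minimality must be invoked. I would apply the inductive hypothesis of Theorem \ref{ourresult2} to $G[S]$ (strictly smaller and inheriting the $\text{mad}$ bound from $G$) to get a valid $(k{+}1)$-pseudoforest decomposition of $G[S]$ with the special pseudoforest having components of at most $d$ edges, and then merge in the remaining edges of $G$ (those inside $V \setminus S$ together with the $S$-to-$(V\setminus S)$ arcs, all oriented outward from $S$ by the definition of $S$) using Hakimi on $G[V \setminus S]$ enlarged by those out-arcs, taking care to assign the new edges into the non-special pseudoforests so the component-size constraint is preserved. The resulting decomposition of $G$ contradicts that $G$ is a counterexample, forcing $\Phi(\vec G) = 0$ and completing the lemma.
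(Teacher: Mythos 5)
Your overall framework -- Hakimi for the upper bound, a potential function on orientations, a reachability set $S$, and a path-reversal to show everything in $S$ has out-degree at most $k$ -- is the right shape, and it matches the spirit of the argument the paper borrows from \cite{pseudoforest}. The computation $e(G[T]) \le \sum_{u \in T} d^+(u) \le k|T|$ for $T \subseteq S$, hence $\text{mad}(G[S]) \le 2k$, is also correct, as is the disposal of the $S = V(G)$ case.

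The gap is in the $S \subsetneq V(G)$ case, and it is a real one: you apply vertex-minimality to $G[S]$, which is the wrong side. That application is vacuous -- you have just shown $\text{mad}(G[S]) \le 2k$, so Hakimi already decomposes $G[S]$ into $k$ pseudoforests with an empty special part; minimality adds nothing. The part of $G$ that actually needs Theorem \ref{ourresult2} is $G[V\setminus S]$, whose vertices can legitimately have out-degree $k+1$. Your proposal instead throws $G[V\setminus S]$ together with the crossing arcs and invokes Hakimi there, but that graph only satisfies $\Delta^+ \le k+1$, so Hakimi yields $k+1$ pseudoforests with no control whatsoever on the size of any component. There is then no way to ``assign the new edges into the non-special pseudoforests'' as you claim: you have $k+1$ pseudoforests to place and only $k$ non-special slots, and no mechanism to make one of those $k+1$ small.

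The fix is to flip the roles. Apply minimality to $G[V\setminus S]$ (a proper subgraph since $v \in S$, inheriting the $\text{mad}$ bound) to obtain $Q_1,\ldots,Q_k,F$ with $F$ having all components of at most $d$ edges. Orient each $Q_i$ and $F$ with out-degree at most $1$; this gives an orientation of $G[V\setminus S]$ with $\Delta^+ \le k+1$ in which every vertex of out-degree $k+1$ has exactly one out-arc in $F$. Now orient $G$ by using this orientation on $G[V\setminus S]$ and the $\Phi$-minimizing orientation on $G[S]$ and on the crossing arcs (which all point out of $S$). Colour the $F$-out-arcs red and everything else blue. Every vertex then has blue out-degree at most $k$: vertices of $S$ because their total out-degree is at most $k$, vertices of $V\setminus S$ because if their out-degree is $k+1$ then exactly one out-arc is red. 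By Hakimi the blue graph splits into $k$ pseudoforests, and the red graph is exactly $F$, which has small components. This is a valid $(k+1)$-pseudoforest decomposition of $G$, contradicting that $G$ is a counterexample. Note also that one cannot shortcut this by simply unioning $P_i \cup Q_i$ from two separate pseudoforest decompositions of the two sides: a component of the union can inherit one cycle from each side, so one really has to route the argument through orientations as above.

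One minor remark: the inequality $2k + \frac{2d}{d+k+1} < 2(k+1)$ holds for every $d$ and $k$ with $k+1>0$; the hypothesis $d\le 2k+2$ plays no role in establishing the initial Hakimi orientation.
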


 Let $\mathcal{F}$ be the set of orientations of $E(G)$ which satisfy Lemma \ref{orientationlemma}. The first observation is that any orientation in $\mathcal{F}$ gives rise to a natural pseudoforest decomposition. 
 
\begin{definition}
  Let $\sigma \in \mathcal{F}$. Given $\sigma$, a \textit{red-blue colouring of $G$} is a colouring of the edges obtained in the following way. For all $v \in V(G)$, if $d^{+}(v) = k+1$, arbitrarily colour $k$ of the outgoing arcs blue and one of the arcs red. If $d^{+}(v) = k$, then colour all outgoing arcs blue.
\end{definition}

Note that given an orientation in $\mathcal{F}$, one can generate many different red-blue colourings. Recall that if a graph admits an orientation where each vertex has out-degree at most one, then the graph is a pseudoforest. Hence we have the following observation.

\begin{obs}
\label{redbluecolouringdecomp}
Given a red-blue colouring of $G$, we can decompose our graph $G$ into $(k+1)$-pseudoforests such that $k$ of the pseudoforests have all of their edges coloured blue, and the other pseudoforest has all of its edges coloured red. 
\end{obs}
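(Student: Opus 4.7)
The plan is to build the decomposition directly from the red-blue colouring by invoking the standard fact that a graph is a pseudoforest if and only if it admits an orientation in which every vertex has out-degree at most $1$. This characterisation holds because such an orientation means each connected component has at most as many edges as vertices, and hence at most one cycle; conversely, any pseudoforest can be so oriented by directing each unicyclic component's cycle as a directed cycle and directing the remaining tree edges toward the cycle (or toward an arbitrary root in an acyclic component).

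First, I would examine the red edges. By the definition of a red-blue colouring, every vertex $v$ has at most one outgoing red arc under $\sigma$: if $d^+(v) = k+1$ then exactly one of its outgoing arcs is coloured red, and if $d^+(v) = k$ then none is. Therefore the subgraph $F$ formed by the red edges, equipped with the inherited orientation from $\sigma$, has maximum out-degree $1$, and is thus a pseudoforest.

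Next I would handle the blue edges. Each vertex has at most $k$ outgoing blue arcs, since at any vertex at most one outgoing arc is red. At each vertex $v$, arbitrarily assign its outgoing blue arcs distinct labels from $\{1, 2, \ldots, k\}$. For each $i \in \{1, \ldots, k\}$, let $C_i$ be the subgraph of $G$ whose edges are exactly the blue edges whose outgoing arc is labelled $i$ at its tail. By construction, every vertex of $C_i$ has out-degree at most $1$ in the inherited orientation, so $C_i$ is a pseudoforest. The subgraphs $C_1, \ldots, C_k, F$ are pairwise edge-disjoint and their union is $E(G)$, giving the desired decomposition into $k+1$ pseudoforests with the required colour properties.

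There is no real obstacle in this argument; the statement is essentially a direct translation of the orientation characterisation of pseudoforests through the definition of a red-blue colouring, which is why the authors state it as an observation.
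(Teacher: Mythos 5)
Your argument is correct and is exactly the reasoning the paper leaves implicit: the paper simply notes the out-degree-at-most-one characterisation of pseudoforests and says "Hence," whereas you spell out the per-vertex labelling of blue arcs into classes $1,\ldots,k$ that makes each $C_i$ have out-degree at most one. Same approach, just fully written out.
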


Observe that one red-blue colouring can give rise to many different pseudoforest decompositions (when $k \geq 2$). Given a pseudoforest decomposition obtained from Observation \ref{redbluecolouringdecomp} we will say a pseudoforest which has all arcs coloured blue is a \textit{blue pseudoforest}, and the pseudoforest with all arcs coloured red is the \textit{red pseudoforest}.

\begin{definition}
Let $f$ be a red-blue colouring of $G$, and let $C_{1},\ldots,C_{k},F$ be a pseudoforest decomposition obtained from $f$ by Observation \ref{redbluecolouringdecomp}. Then we say that \textit{$C_{1},\ldots,C_{k},F$ is a pseudoforest decomposition generated from $f$}. We will always use the convention that $F$ is the red pseudoforest, and $C_{i}$ is a blue pseudoforest. 
\end{definition}

  As $G$ is a counterexample, in every pseudoforest decomposition generated from a red-blue colouring, there is a connected component of the red pseudoforest which has more than $d$ edges. 

Now we define a residue function which we will use to pick our minimal counterexample. 
\begin{definition}
Let $f$ be a red-blue colouring and $C_{1},\ldots,C_{k},F$ be a pseudoforest decomposition generated by $f$. Let $\mathcal{T}$ be the set of components $F$. We define the \textit{residue function}, $\rho$, as:
\[\rho(F) = \sum_{K \in \mathcal{T}} \max\{e(K) -d,0\}.\]
\end{definition}

Using a red-blue colouring, and the resulting pseudoforest decomposition, we define an induced subgraph of $G$ which we will focus our attention on. 

\begin{definition}
Suppose that $f$ is a red-blue colouring of $G$, and suppose $D= (C_{1},\ldots,C_{k},F)$ is a pseudoforest decomposition generated from $f$. Let $R$ be an component of $F$ such that $e(R) >d$. We define the subgraph $H_{f,D,R}$ in the following manner. Let $S \subseteq V(G)$ where $v \in S$ if and only if there exists a path $P = v_{1},\ldots,v_{m}$ such that $v_{m}= v$ where $v_{1} \in V(R)$, and either $v_{i}v_{i+1}$ is an arc $(v_{i},v_{i+1})$ coloured blue, or $v_{i}v_{i+1}$ is an arbitrarily directed arc coloured red. Then we let $H_{f,D,R}$ be the graph induced by $S$.
\end{definition}

Given a graph $H_{f,D,R}$, we will say $R$ is the \textit{root}. We will say the \textit{red components} of $H_{f,D,R}$ are the components of $F$ contained in $H_{f,D,R}$, and given a vertex $x \in V(H_{f,D,R})$, we let $R^{x}$ denote the red component of $H_{f,D,R}$ containing $x$.  As notation, given a subgraph $K$ of $G$, we will let $E_{b}(K)$ and $E_{r}(K)$ denote the set of edges of $K$ coloured blue and red, respectively. We will also let $e_{b}(K) = |E_{b}(K)|$ and $e_{r}(K) = |E_{r}(K)|$. 

We make an important observation about the average degree of $H_{f,D,R}$.
By definition, for every vertex $v \in V(H_{f,D,R})$, all blue outgoing arcs from $v$ are in $E(H_{f,D,R})$. Therefore,
\[\frac{e_{b}(H_{f,D,R})}{v(H_{f,D,R})} = k.\]

Hence if the following inequality holds
\[\frac{e_{r}(H_{f,D,R})}{v(H_{f,D,R})} > \frac{d}{d+k+1},\]
we would have
\[\frac{e(H_{f,D,R})}{v(H_{f,D,R})} = \frac{e_{r}(H_{f,D,R})}{v(H_{f,D,R})} + \frac{e_{b}(H_{f,D,R})}{v(H_{f,D,R})} > k + \frac{d}{d+k+1},\]
which would contradict the maximum average degree bound. Our goal will be to pick a particular red-blue colouring $f$ and root component $R$ to obtain such a contradiction. 

Towards this goal, we define the notion of a \textit{troublesome component}.

\begin{definition}
Let $P$ be the red pseudoforest of some red-blue colouring. Let $K$ be a subgraph of $P$. We will say $K$ is \textit{troublesome} if  
\[\frac{e_{r}(K)}{v(K)} < \frac{d}{d+k+1}.\]
\end{definition}

Observe that as $d \leq 2k+2$, it follows that:
\[\frac{d}{d+k+1} \leq \frac{2}{3}.\]

We pause to consider the special case where $K$ is a connected component of the red pseudoforest and troublesome. If $K$ contains a cycle, then $K$ is a pseudoforest, and hence not troublesome. Therefore $K$ must be a tree. It is easy to see that if $K$ is a tree and troublesome, then $K$ must either be a single vertex, or isomorphic to $K_{2}$.

We now define the notion of a legal order of the components of the red pseudoforest in $H_{f,D,R}$. 

\begin{definition}
  We call an ordering $(R_{1},\ldots,R_{t})$ of the red components of $H_{f,D,R}$ \textit{legal} if all components are in the ordering, $R_{1}$ is the root component, and for all $j \in \{2,\ldots,t\}$ there exists an integer $i$ with $1 \leq i < j$ such that there is a blue arc $(u,v)$ such that $u \in V(R_{i})$ and $v \in V(R_{j})$. 
\end{definition} 

Let $(R_{1},\ldots,R_{t})$ be a legal ordering. We will say that $R_{i}$ is a \textit{parent} of $R_{j}$ if $i < j$ and there is a blue arc $(v_{i},v_{j})$ where $v_{i} \in R_{i}$  and $v_{j} \in R_{j}$. In this definition a red component may have many parents. To remedy this, if a red component has multiple parents, we arbitrarily pick one such red component and designate it as the only parent. If $R_{i}$ is the parent of $R_{j}$, then we say that $R_{j}$ is a \textit{child} of $R_{i}$. We say a red component $R_{i}$ is an \textit{ancestor} of $R_{j}$ if we can find a sequence of red components $R_{i_{1}},\ldots,R_{i_{m}}$ such that $R_{i_{1}} = R_{i}$, $R_{j_{m}} = R_{j}$, and $R_{i_{q}}$ is the parent of $R_{i_{q+1}}$ for all $q \in \{1,\ldots,m-1\}$. 

An important definition we need is the notion of vertices determining a legal order.
\begin{definition}
Given a legal order $(R_{1},\ldots,R_{t})$, we will say a vertex $v$ \textit{determines the legal order for $R_{j}$} if there is a blue arc $(u,v)$ such that $u \in R_{i}$ and $v \in R_{j}$ such that $i < j$. 
\end{definition}
  Observe there may be many vertices which determine the legal order for a given red component. More importantly, for every component which is not the root, there exists a vertex which determines the legal order.
  
We also want to compare two different legal orders. 

\begin{definition}
Let $(R_{1},\ldots,R_{t})$ and $(R'_{1},\ldots,R'_{t'})$ be two legal orders. We will say $(R_{1},\ldots,R_{t})$ is \textit{smaller} than $(R'_{1},\ldots,R'_{t})$ if  the sequence $(e(R_{1}),\ldots,e(R_{t}))$ is smaller lexicographically than $(e(R'_{1}),\ldots,e(R'_{t'}))$. 
  
\end{definition}

With this, we will pick our minimal counterexample in the following manner.

First, we pick our counterexample to be minimized with respect to the number of vertices. Then, we pick an orientation in $\mathcal{F}$, a red-blue colouring $f$ of this orientation, a pseudoforest decomposition $D = (C_{1},\ldots,C_{k},F)$ generated by $f$, a root component $R$ of $F$, and lastly a legal order $(R_{1},\ldots,R_{t})$ of $H_{f,D,R}$ subject to the following conditions in the following order:

\begin{enumerate}
\item{The number of cycles in $F$ is minimized (so the number of cycles coloured red),}

\item{subject to the above condition, we minimize the residue function $\rho$,}

\item{and subject to both of the above conditions, $(R_{1},\ldots,R_{t})$ is the smallest legal order.}

\end{enumerate}

From here on out, we will assume we are working with a counterexample picked in the manner described.

\subsection{The flip operation}
Let $f$ be the red-blue colouring of our counterexample, and let $C_{1},\ldots,C_{k}$ be the blue pseudoforests, and $F$ the red pseudoforest. Let $(R_{1},\ldots,R_{t})$ be the legal ordering picked for our counterexample.

\begin{definition}
Let $f$ be a red-blue colouring. Let $(x,y)$ be an arc coloured blue, $y$ is not in a red cycle, and suppose that $e = xv$ is a arbitrarily oriented red arc incident to $x$. To \textit{flip on $e$ and $(x,y)$} is to take a maximal directed red path $Q = v_{1},v_{2},\ldots,v_{n},y$ where $(v_{i},v_{i+1})$ is a red arc, $(v_{n},y)$ is a red arc, reverse the direction of all arcs of $P$, change the colour of $(x,y)$ to red, reorient $(x,y)$ to $(y,x)$, change the colour of $e$ to blue, and (if necessary), reorient $(v,x)$ to $(x,v)$. 
\end{definition}

\begin{obs}
\label{flipobservation}
Suppose we flip on an edge $e =xv$ and $(x,y)$. Then the resulting orientation is in $\mathcal{F}$. 
\end{obs}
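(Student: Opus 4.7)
The plan is a local verification that every vertex retains out-degree in $\{k,k+1\}$ after the flip. Since the flip only modifies arcs incident to the vertices in $\{x,v,y,v_1,\ldots,v_n\}$, only these vertices can have their out-degree changed, so I would check each in turn.

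For each interior vertex $v_i$ of the path $Q$ (with $1 < i \leq n$), the reversal trades the outgoing red arc $(v_i,v_{i+1})$ (or $(v_n,y)$ if $i=n$) for a new outgoing red arc $(v_i,v_{i-1})$, leaving $d^+(v_i)$ unchanged. At $v_1$, however, the outgoing arc $(v_1,v_2)$ is lost without replacement; here I would invoke the red-blue coloring rule, which forces $d^+(v_1)=k+1$ before the flip (since $v_1$ has an outgoing red arc), and hence $d^+(v_1)=k$ afterward, still admissible.

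At $y$, I would use the maximality of $Q$ together with the hypothesis that $y$ is not on a red cycle to conclude that $y$ has no outgoing red arc before the flip---otherwise $Q$ could be extended. Consequently $d^+(y)=k$ before the flip, and after the flip the new outgoing arc at $y$ (coming from recolouring $(x,y)$ to the red arc $(y,x)$) raises $d^+(y)$ to $k+1$. At $x$ and $v$ I would split into the two subcases $e=(x,v)$ and $e=(v,x)$. In the first subcase $d^+(x)=k+1$ before the flip (since $x$ has an outgoing red arc), and the loss of $(x,y)$ as an outgoing arc of $x$ leaves $d^+(x)=k$, while $v$'s out-degree is untouched. In the second subcase the reorientation of $e$ to $(x,v)$ coloured blue keeps $d^+(x)$ unchanged while dropping $d^+(v)$ from $k+1$ to $k$. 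In every subcase the out-degrees remain in $\{k,k+1\}$.

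The main obstacle is the delicate bookkeeping at $y$: I must unpack the definition of the flip to confirm that the net change in $d^+(y)$ is exactly $+1$, which is precisely where both the maximality of $Q$ and the condition that $y$ is not in a red cycle enter the argument. Once this is in hand, the out-degree check at each remaining vertex reduces to a short case analysis along the lines above.
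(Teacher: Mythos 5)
Your bookkeeping at $y$ has a genuine gap. You read the definition literally, taking $Q = v_1,\ldots,v_n,y$ with red arcs $(v_1,v_2),\ldots,(v_{n-1},v_n),(v_n,y)$ all directed toward $y$, and you conclude that after the flip $y$ gains exactly one new out-arc, namely $(y,x)$. But the reversal of $Q$ also turns $(v_n,y)$ into $(y,v_n)$, a \emph{second} new out-arc at $y$. Indeed, your own account of $v_n$ records that $v_n$ trades $(v_n,y)$ for $(v_n,v_{n-1})$; that arc does not disappear, it reappears as an out-arc of $y$. So under your reading, $d^+(y)$ goes from $k$ to $k+2$, and the observation would be false. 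This is precisely the ``delicate bookkeeping at $y$'' you flagged, and it does not work out.

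The resolution is that the paper's written definition of $Q$ contains a typo. Figure~1 (where $Q = y,w$ and the red arc is $(y,w)$, directed away from $y$, becoming $(w,y)$ after the flip) and the paper's own proof (which asserts $y$ has red out-degree exactly one afterward) show that $Q$ is meant to be the maximal directed red path \emph{starting} at $y$, following red out-arcs. Under that reading, $y$ loses its red out-arc to the reversal (if $Q$ is nontrivial) and gains $(y,x)$, so $d^+(y)$ is unchanged, or goes from $k$ to $k+1$ when $Q$ is a single vertex. The vertex that genuinely needs care is the far endpoint $u_m$ of $Q$, which \emph{gains} an out-arc $(u_m,u_{m-1})$ from the reversal; maximality of $Q$ together with the hypothesis that $y$ is not on a red cycle ensures $u_m$ had no red out-arc beforehand, so $d^+(u_m)$ goes from $k$ to $k+1$. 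Your analysis of $x$, $v$, and the interior vertices of $Q$ is fine in either reading, but the argument at $y$ (and the corresponding endpoint of $Q$) needs to be rewritten along these lines.
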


\begin{proof}
If $xv$ is oriented $(x,v)$, then the flip operation changes the colour of $(x,y)$ to red and the colour of $e$ to blue, all internal vertices of $Q$ still have red out-degree at most one, $y$ has red out-degree one as we orient $(x,y)$ to $(y,x)$, and $v$ still has red out-degree at most one. By construction all the blue out-degrees stay the same. Hence this orientation is in $\mathcal{F}$. 

If $xv$ is oriented $(v,x)$, then the flip operation changes the colour of $(x,y)$ to red, and then reorients $(v,x)$ to $(x,v)$ and reverses the orientation on a maximal directed red path $Q$. By construction, the blue out-degrees of all vertices remain the same, and similarly, the red out-degrees stay the same. Hence this orientation is in $\mathcal{F}$.   
\end{proof}

To avoid repetitively mentioning it, we will implicitly make use of Observation \ref{flipobservation}. 

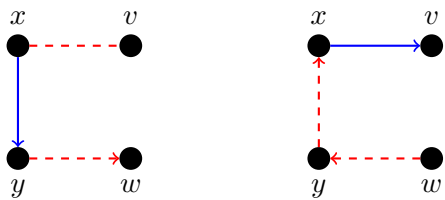
\begin{figure}
\label{flip}
\centering
\begin{tikzpicture}
\node[blackvertex, label = {$x$}] at (0,0) (v1) {$x$};
\node[blackvertex, label = {$v$}] at (1.5,0) (v2) {$x$};
\node[blackvertex, label = below:{$y$}] at (0,-1.5)(v3) {$x$};
\node[blackvertex, label = below:{$w$}] at (1.5,-1.5) (v4) {$x$};
\draw[thick,dashed,red] (v1)--(v2);
\draw[thick,blue, ->] (v1)--(v3);
\draw[thick,dashed,red,->] (v3)--(v4);

\begin{scope}[xshift = 4cm]
\node[blackvertex,label = {$x$}] at (0,0) (v1) {$x$};
\node[blackvertex,label = {$v$}] at (1.5,0) (v2) {$x$};
\node[blackvertex,label = below:{$y$}] at (0,-1.5)(v3) {$x$};
\node[blackvertex,label = below:{$w$}] at (1.5,-1.5) (v4) {$x$};
\draw[thick,blue, ->] (v1)--(v2);
\draw[thick,dashed,red, ->] (v3)--(v1);
\draw[thick,dashed,red,->] (v4)--(v3);
\end{scope}
\end{tikzpicture}
\caption{An example of a flip on an edge $e=xv$ and $(x,y)$. Dashed lines indicate red edges, and solid lines are blue edges. The path $y,w$ is $Q$ in this example.}
\end{figure}

\subsection{Key Lemmas}
Now we are ready to prove the main lemmas. First we make a basic observation about red cycles.

\begin{obs}
\label{redcyclesaturationlemma}
Let $(x,y)$ be an edge which is coloured blue such that $R^{x}$ is distinct from $R^{y}$ and $R^{y}$ is a tree. Then $x$ does not lie in a cycle of $F$. 
\end{obs}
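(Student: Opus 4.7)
The plan is to derive a contradiction with condition (1) in the choice of the minimal counterexample (minimizing the number of red cycles in $F$) by exhibiting a flip that destroys a cycle.

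Suppose, for contradiction, that $x$ lies in a red cycle $C \subseteq R^x$. Since $x$ has red-degree $2$ in $C$, there is a red edge $e = xv \in E(C)$ incident to $x$; moreover $v \neq y$, for otherwise the red edge $xy$ would force $R^x = R^y$, contrary to hypothesis. All hypotheses of the flip operation are then satisfied: $(x,y)$ is blue, $e = xv$ is a red arc incident to $x$, and $y$ lies in no red cycle because $R^y$ is a tree. Hence we may flip on $e$ and $(x,y)$, and Observation \ref{flipobservation} guarantees that the resulting orientation lies in $\mathcal{F}$, so the new red-blue colouring generates a valid pseudoforest decomposition.

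Let $F'$ denote the red pseudoforest produced by the flip. Viewed as an undirected multigraph, $F'$ is obtained from $F$ by deleting $e$ and inserting the edge $xy$; the reversal of arcs along the maximal directed red path $Q$ ending at $y$ (which terminates because $R^y$ is a tree, so the backward red walk from $y$ reaches a vertex with no red in-neighbour) leaves the underlying edge set unchanged. Since $C$ is the unique cycle of the pseudoforest component $R^x$ and $e \in E(C)$, the subgraph $R^x \setminus e$ is a spanning tree of $R^x$. Adding the new red edge $xy$ between the trees $R^x \setminus e$ and $R^y$ merges them into a single tree on $V(R^x) \cup V(R^y)$, destroying $C$ and introducing no new cycle. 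All other components of $F$ are unaffected, so $F'$ has exactly one fewer cycle than $F$.

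This contradicts condition (1) of the minimal counterexample. The main point requiring care is verifying that the flip both is legitimate and strictly decreases the number of red cycles; both facts rely precisely on the hypothesis that $R^y$ is a tree, which both prevents the flip from running into a downstream red cycle and ensures that fusing $R^x \setminus e$ with $R^y$ via a single edge yields a tree rather than creating a new cycle. Hence no such vertex $x$ can lie in a cycle of $F$, completing the proof.
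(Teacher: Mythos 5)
Your proof is correct and follows the same approach as the paper: flip on the blue arc $(x,y)$ and a red edge $e$ of the cycle through $x$, then conclude that this strictly decreases the number of red cycles, contradicting the first minimality condition on the counterexample. You spell out in more detail why the cycle count drops (that the underlying undirected red graph becomes $F - e + xy$, that $R^x - e$ is a tree, and that joining two trees by one edge yields a tree), which the paper leaves implicit, but the core argument is identical.
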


\begin{proof}
Suppose towards a contradiction that $x$ lies in a cycle of $F$. Let $e$ be an edge incident to $x$ which lies in the cycle coloured red. Now flip at $(x,y)$ and $e$. As $(x,y)$ was an arc between two red components, and $e$ was in the cycle coloured red, after performing the flip, we reduce the number cycle in $F$ by one. However, this contradicts our choice of minimum counterexample.  
\end{proof}

Now we show that for any red component and any child of that red component either the child contains a cycle, or together they have at least $d$ edges.


\begin{lemma}
\label{troublesomedonttroublesome}
Let $R^{x}$ and $R^{y}$ be red components such that $R^{y}$ is the child of $R^{x}$, $R^{y}$ does not contain a cycle, and $(x,y)$ is a blue arc from $x$ to $y$. Then $e(R^{x}) + e(R^{y}) \geq d$. 
\end{lemma}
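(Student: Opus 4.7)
My plan is to argue by contradiction: assuming $e(R^x) + e(R^y) \leq d-1$, I will produce a flip that violates the lexicographic chain of minimality conditions on the counterexample.

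By Observation \ref{redcyclesaturationlemma}, $x$ lies on no red cycle, so every red edge incident to $x$ is a bridge in $R^x$. Picking such an edge $e = xv$, the flip on $e$ and $(x,y)$ splits $R^x$ into pieces $T_x \ni x$ and $T_v \ni v$, reverses a maximal directed red path out of $y$ inside the tree $R^y$, and forms a new red component $M = T_x \cup R^y \cup \{(y,x)\}$. Under our assumption, $e(M) = e(T_x) + e(R^y) + 1 \leq d$ and $e(T_v) \leq e(R^x) - 1 < d$, so neither new component contributes to the residue $\rho$, and $\rho$ is preserved. Since $x$ is off any red cycle of $R^x$, the total number of red cycles is also unchanged. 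Hence any contradiction must come from minimality condition (3), namely by exhibiting a legal order (for the new configuration) that is lexicographically smaller than the old one.

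The key computation is that replacing $R^x$ by $M$ at the old position of $R^x$ changes that position's edge count from $e(R^x)$ to $e(M) = e(R^x) - e(T_v) + e(R^y)$. If $e$ can be chosen so that $e(T_v) > e(R^y)$, then $e(M) < e(R^x)$ and the new legal order strictly decreases at that position. When no such $e$ exists, I plan to use any pre-existing blue in-arc at $v$ coming from a red component earlier than $R^x$ in the old legal order to place $T_v$ at an earlier position than $R^x$ in the new legal order, which again forces a strict lex-decrease.

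The main obstacle is the residual subcase in which neither mechanism applies: $R^x$ is small enough that every choice of $e$ gives $e(T_v) \leq e(R^y)$, and simultaneously $v$ has no pre-existing blue in-arc from any component earlier than $R^x$. This subcase includes the corner instance $R^x = \{x\}$, where no red edge at $x$ exists and a preliminary orientation reversal on a blue in-arc of $x$ is required before a flip is even defined. The technical heart of the proof is to show that this residual subcase cannot occur in a minimum counterexample, by combining the tree structure of $R^y$, the parent--child relations in the legal order, and the earlier minimality choices on cycles and residue to rule out each configuration.
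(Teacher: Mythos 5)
Your proposal correctly identifies the setup: flip on a red edge $e=xv$ at $x$ and the blue arc $(x,y)$, observe that the assumption $e(R^x)+e(R^y)<d$ keeps $\rho$ from increasing and (via Observation \ref{redcyclesaturationlemma}) keeps the red cycle count fixed, and then hunt for a lexicographically smaller legal order. This is the right frame. But there are two gaps, one small and one fatal.

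The small one: your ``first mechanism'' places the merged component $M = T_x\cup R^y\cup\{xy\}$ at the old position of $R^x$ whenever $e(T_v)>e(R^y)$. For that slot to be legal you must also know $M$ receives a blue arc from an \emph{earlier} component; equivalently, a vertex determining the legal order for $R^x$ must land in $T_x$ rather than $T_v$. The paper sidesteps this by doing the opposite: it chooses a vertex $w\ne x$ that determines the legal order for $R^x$, takes $e$ to be the first edge of the $x$–$w$ path (so $w\in T_v$), and then places $T_v$ — not $M$ — at $R^x$'s old slot. Since $e(T_v)\le e(R^x)-1$, this is automatically smaller and needs no hypothesis like $e(T_v)>e(R^y)$.

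The fatal one: you correctly isolate the residual subcase (including $R^x$ an isolated vertex, where no red flip edge at $x$ even exists, and more generally when the only vertex of $R^x$ that determines the legal order is $x$ itself), but your stated plan is to prove this subcase cannot occur. It \emph{can} occur — $R^x=\{x\}$ is a perfectly consistent configuration — so that plan cannot succeed. What the paper does instead (its Case 2) is a genuinely different operation: walk up the blue ancestor chain $x, x_1, \ldots, x_n$ from $x$ to the nearest ancestor component $R^{x_n}$ with a red edge $e$ at $x_n$, reverse every blue arc along that chain, recolour $(x,y)$ to red, and recolour $e$ to blue oriented out of $x_n$. The orientation remains in $\mathcal{F}$ by the same check as Observation \ref{flipobservation}; all new red components stay below $d$ edges so $\rho$ does not increase; and the component containing $x_n$ strictly loses an edge, yielding a strictly smaller legal order by truncating at that position. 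Without this ancestor-chain move (or an equivalent one), your proof does not close, and no amount of ruling-out will finish the argument.
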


\begin{proof}
Suppose towards a contradiction that $e(R^{x}) + e(R^{y}) < d$. Hence $e(R^{x}) < d$. Thus $R^{x}$ is not the root component.  Let $w$ be a vertex which determines the legal order for $R^{x}$. Observe that by Observation \ref{redcyclesaturationlemma} $x$ does not belong to a cycle of $R^{x}$.

\textbf{Case 1:} $w \neq x$.

Let $e$ be the edge incident to $x$ in $R^{x}$ such that $e$ lies on the path from $x$ to $w$ in $R^{x}$. Then flip on $(x,y)$ and $e$. As $e(R^{x}) + e(R^{y}) < d$, all resulting red components have less than $d$ edges, and hence we do not increase the residue function. Furthermore, we claim we can find a smaller legal order. Let $R_{i}$ be the component in the legal order corresponding to $R^{x}$. Then consider the new legal order where the components $R_{1},\ldots,R_{i-1}$ remain in the same position, we replace $R_{i}$ with $R^{w}$, and then complete the order arbitrarily. By how we picked $e$, $e(R^{w})$ is strictly smaller than $e(R_{i})$, and hence we have found a smaller legal order, a contradiction. 

\textbf{Case 2:} $w =x$.

As $R^{x}$ is not the root component, let $R^{x_{n}}$ be an ancestor of $R^{x}$ such that $e(R^{x_{n}}) \geq 1$. Let $R^{x_{n}},R^{x_{n-1}},\ldots,R^{x_{1}}$ be a sequence of red components such that for $i \in \{1,\ldots,n-1\}$, $R^{x_{i}}$ is the child of $R^{x_{i+1}}$ and $R^{x}$ is the child of $R^{x_{1}}$. Up to relabelling the vertices,  there is a path $P = x_{n},\ldots,x_{1},x$ such that for all $i \in \{1,\ldots,n-1\}$,  $(x_{i+1},x_{i})$ is an arc coloured blue, and $(x_{1},x)$ is a arc coloured blue. Let $e$ be a red edge incident to $x_{n}$. Now do the following. Colour $(x,y)$ red, and reverse the direction of all arcs in $P$. Colour $e$ blue, and orient $e$ away from $x_{n}$. By the same argument in Observation \ref{flipobservation}, the resulting orientation is in $\mathcal{F}$. Furthermore as $e(R^{x}) + e(R^{y}) < d$, all resulting red components have less than $d$ edges, and hence $\rho$ did not increase. Finally, we can find a smaller legal order in this orientation, as we simply take the same legal order up to the original component containing $x_{n}$, and then complete the remaining order arbitrarily. As the component containing $x_{n}$ has at least one less edge now, this order is smaller lexiographically, a contradiction.

\end{proof}

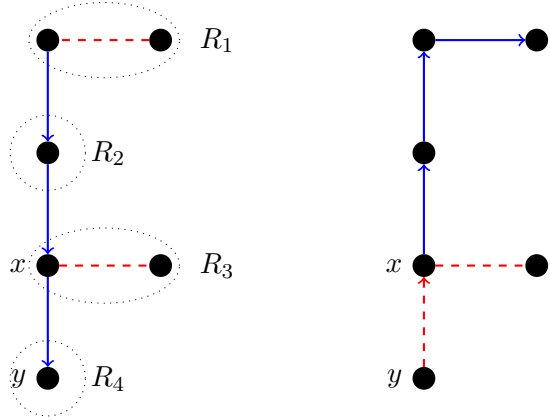
\begin{figure}
\label{flip}
\centering
\begin{tikzpicture}
\node[blackvertex, label = left:{$x$}] at (0,0) (v1) {$x$};
\node[blackvertex] at (1.5,0) (v2) {$x$};
\draw[dashed,thick,red] (v1)--(v2);
\draw[dotted]  (.75,0) ellipse(1cm and .5cm);
\node[dummywhite] at (2.25,0) (v10) {$R_{3}$};

\node[blackvertex, label = left: {$y$}] at (0,-1.5) (v3) {$x$};
\draw[thick,blue,->] (v1)--(v3);
\draw[dotted] (0,-1.5) ellipse (0.5cm and 0.5cm);
\node[dummywhite] at (.8,-1.5) (v10) {$R_{4}$};

\node[blackvertex] at (0,1.5) (v4) {$x$};
\node[blackvertex] at (0,3) (v5) {$x$};
\draw[dotted] (0,1.5) ellipse (0.5cm and 0.5cm);
\node[dummywhite] at (.8,1.5) (v10) {$R_{2}$};
\draw[dotted] (.75,3) ellipse (1cm and 0.5cm);
\node[dummywhite] at (2.25,3) (v10) {$R_{1}$};
\draw[thick,blue,->] (v5)--(v4);
\draw[thick,blue,->] (v4)--(v1);

\node[blackvertex] at (1.5,3) (v6) {$x$};
\draw[thick,red,dashed] (v6)--(v5);

\begin{scope}[xshift = 5cm]
\node[blackvertex, label = left:{$x$}] at (0,0) (v1) {$x$};
\node[blackvertex] at (1.5,0) (v2) {$x$};
\draw[dashed,thick,red] (v1)--(v2);

\node[blackvertex, label = left: {$y$}] at (0,-1.5) (v3) {$x$};
\draw[thick,red,dashed,->] (v3)--(v1);

\node[blackvertex] at (0,1.5) (v4) {$x$};
\node[blackvertex] at (0,3) (v5) {$x$};

\draw[thick,blue,->] (v4)--(v5);
\draw[thick,blue,->] (v1)--(v4);

\node[blackvertex] at (1.5,3) (v6) {$x$};
\draw[thick,blue,->] (v5)--(v6);
\end{scope}
\end{tikzpicture}
\caption{An example of a possible situation in Case $2$ of Lemma \ref{troublesomedonttroublesome}. Here in the picture on the left we have components $R_{1},R_{2},R_{3},R_{4}$, where they also appear in that legal order,  $R_{2}$ is an isolated vertex, and $R_{1}$ possibly has more red edges left undrawn. The picture on the right is the operation done in Case $2$. }
\end{figure}

To avoid repetition in the next argument, we make the following observation.

\begin{obs}
\label{twolargecomponents}
Let $(x,y)$ be a blue arc such that $R^{y}$ is a troublesome child of $R^{x}$. Then there does not exist an edge $e \in E(R^{x})$ incident to $x$ such that flipping on $(x,y)$ and $e$ results in two new components, both with more than $d$ edges. 
\end{obs}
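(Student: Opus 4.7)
The plan is to assume for contradiction that such an edge $e$ exists and derive a strict decrease in the residue function $\rho$ without increasing the number of red cycles, contradicting the minimality of our counterexample under condition (2).

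First I would observe that in order for the flip on $(x,y)$ and $e$ to yield two new red components (rather than one), the edge $e$ must be a bridge of $R^{x}$: if $e$ lay on the unique cycle of $R^{x}$, then $R^{x} \setminus e$ would remain connected, and the flip would produce only the single merged component $(R^{x} \setminus e) \cup R^{y} \cup \{(x,y)\}$. So I would write $R^{x} \setminus e = A \cup B$ with $x \in V(A)$; the two new red components after the flip are $A' := A \cup R^{y} \cup \{(x,y)\}$ and $B$, satisfying $e(A') + e(B) = e(R^{x}) + e(R^{y})$.

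Next I would use that $R^{y}$, being a troublesome connected component of the red pseudoforest, is either a single vertex or $K_{2}$, so $e(R^{y}) \leq 1$. The hypothesis $e(A') > d$ and $e(B) > d$ gives $e(R^{x}) + e(R^{y}) > 2d$, and since $e(R^{y}) \leq 1 < d$ (because $d \geq 2$), this forces $e(R^{x}) > d$. I would then compare residue contributions: before the flip, $R^{x}$ and $R^{y}$ together contribute $e(R^{x}) - d$ to $\rho$, while after the flip, $A'$ and $B$ together contribute $e(R^{x}) + e(R^{y}) - 2d$. The difference is $e(R^{y}) - d \leq 1 - d < 0$, so $\rho$ strictly decreases.

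To conclude I would verify that the number of red cycles is unchanged. Since $R^{y}$ is cycle-free and the new edge $(x,y)$ bridges the previously disjoint graphs $A$ and $R^{y}$, no cycle is created in $A'$; and since $e$ was a bridge in $R^{x}$, any cycle of $R^{x}$ is preserved intact in either $A$ or $B$. No other component is affected by the flip. Thus the flipped decomposition has the same red cycle count but strictly smaller $\rho$, contradicting condition (2). The main subtlety I anticipate is that the flip also reverses the orientation of a red path $Q$ and locally reorients arcs at $x$ and $v$, but these changes modify only orientations and not the underlying edge sets, so they do not affect the above component-size and cycle-count comparisons.
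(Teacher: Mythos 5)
Your proposal is correct and follows essentially the same approach as the paper: flip on $(x,y)$ and $e$, use $e(R^{x'})+e(R^{x''})=e(R^{x})+e(R^{y})$ together with $e(R^{y})\leq 1<d$ to show $\rho$ strictly decreases, contradicting minimality condition (2). You additionally make explicit two details the paper leaves implicit --- that $e$ must be a bridge of $R^{x}$ for two components to result, and that the flip preserves the red cycle count so condition (1) is not violated --- both of which are genuinely needed for the contradiction and make the argument more complete.
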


\begin{proof}
Suppose towards a contradiction that such an edge $e$ exists. Flip on $(x,y)$ and $e$. Let $R^{x'}$ and $R^{x''}$ be the new components. Therefore, we increase the residue function by $e(R^{x'}) + e(R^{x''}) -2d$ and decrease the residue function by $e(R^{x}) -d$. Observe that 
\[e(R^{x'}) + e(R^{x''}) = e(R^{y}) + e(R^{x}),\] and $e(R^{y}) \leq 1 < d$. Hence \[e(R^{x'}) + e(R^{x''}) -2d = e(R^{y}) + e(R^{x}) -2d \leq e(R^{x}) -d.\]
Hence the residue function decreases, contradicting our choice of minimal counterexample. 
\end{proof}

Now we show that any red component with at least two edges has few troublesome children.

\begin{lemma}
\label{boundedchildren}
	Let $R^{x}$ be a red component where if $\frac{d}{d+k+1} < \frac{1}{2}$, then  $e(R^{x}) \geq 2$, and otherwise $e(R^{x}) \geq 3$. Then $R^{x}$ has at most $k$ troublesome children. 
\end{lemma}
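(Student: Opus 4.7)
The plan is to argue by contradiction: assume $R^x$ has $k+1$ troublesome children $R^{y_1},\ldots,R^{y_{k+1}}$, and for each $i$ fix a blue arc $(x_i,y_i)$ from $R^x$ to $R^{y_i}$. Recall that each $R^{y_i}$ is a single vertex or a copy of $K_2$, since troublesome components of a pseudoforest are trees with at most one edge. The driving observation is that every vertex of $G$ has exactly $k$ blue outgoing arcs (by the definition of the red-blue colouring), so no single vertex of $R^x$ can be the tail of blue arcs to $k+1$ distinct children.

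Assuming first that $R^x$ is not the root, let $w\in V(R^x)$ be a vertex determining the legal order for $R^x$; the counting above forces some index $i$ with $x_i\neq w$. I would then flip on $(x_i,y_i)$ together with the red edge $e$ incident to $x_i$ on a path from $x_i$ to $w$ in $R^x$. After the flip, $R^x\cup R^{y_i}$ yields two new components: the $w$-side component $R^w$ of $R^x\setminus e$, and a component $A_i$ containing $x_i$ and absorbing $R^{y_i}$; in particular $e(R^w)<e(R^x)$. Three items must then be checked: (i) the red cycle count does not increase, by Observation \ref{flipobservation} together with the fact that $y_i$ lies in a tree; (ii) the residue $\rho$ does not strictly increase, which uses Observation \ref{twolargecomponents} to forbid both $R^w$ and $A_i$ from exceeding $d$ edges simultaneously, and uses the hypothesis $e(R^x)\geq 2$ (resp.\ $\geq 3$) to rule out the borderline configuration in which $e(R^w)=0$, $R^{y_i}\cong K_2$, and $e(A_i)=d+1$; and (iii) placing $R^w$ at the position formerly occupied by $R^x$ gives a strictly smaller legal order (legal since $w$ retains a blue in-arc from an earlier component), contradicting minimality. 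The root case is handled separately: since $e(R^x)>d$, one picks $(x_i,y_i)$ and $e$ to strictly reduce $\rho$ rather than to find a smaller legal order.

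The main obstacle is (ii). A naive flip can raise $\rho$ by exactly $1$ in the marginal configuration above, which forces $w$ to be a leaf of $R^x$ with $x_i$ its unique $R^x$-neighbour. Pigeonhole strikes again: if every usable $(x_i,y_i)$ landed in this marginal configuration, all $k+1$ arcs would have $x_i$ equal to the single $R^x$-neighbour of $w$, contradicting its blue out-degree of $k$. Thus some $(x_j,y_j)$ admits a non-marginal choice of $e_j$ along the path from $x_j$ to $w$ -- either because $x_j$ is not adjacent to $w$, or because $R^{y_j}$ is a single vertex -- so that the corresponding flip succeeds.
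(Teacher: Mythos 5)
Your plan parallels the paper's in outline (flip on an arc to a troublesome child, argue the residue does not increase, then produce a smaller legal order), but the central pigeonhole at the end has a genuine gap, and it is precisely the case that the paper spends its hardest subcase on.

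You discard arcs with $x_i = w$ as ``unusable'' and argue that if every \emph{usable} arc $(x_i,y_i)$ lands in the marginal configuration, then every usable $x_i$ equals the unique $R^x$-neighbour $u$ of $w$, and you conclude ``all $k+1$ arcs have $x_i = u$.'' That conclusion does not follow: the arcs with $x_i = w$ were excluded from the start, so the $k+1$ arcs split between $w$ and $u$. Since $w$ contributes at most $k$ arcs and $u$ contributes at most $k$ arcs, $k+1 \leq 2k$ holds for all $k \geq 1$, and there is no contradiction. In other words, the configuration you cannot rule out is exactly the one where the legal-order-determining leaf $w$ (with red degree $1$) itself has blue arcs to troublesome children, and the only other source of such arcs is its unique red neighbour. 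The paper's proof confronts this directly: when the vertex $z$ determining the legal order has $d_r(z) = 1$ \emph{and} $z$ has a blue arc $(z,q)$ to a troublesome child, it flips on $(z,q)$ together with the one red edge at $z$, rather than on any arc emanating from another vertex. Your proposal never flips from $w$, so it cannot close this case.

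This also explains why you have mislocated the role of the hypothesis $e(R^x)\geq 2$ (respectively $\geq 3$). You claim it rules out the marginal configuration $e(R^w)=0$, $R^{y_i}\cong K_2$, $e(A_i)=d+1$; but the marginal configuration requires $e(R^x)\geq d$, which is perfectly consistent with $e(R^x)\geq 3$ (indeed $K_2$ is troublesome only when $d>k+1\geq 2$). In the paper, the $\geq 2$/$\geq 3$ bound is used in exactly the subcase you skip: after flipping on $(z,q)$ and the edge at $z$, the new component containing $z$ has at most $1$ (resp.\ at most $2$) red edges, and the hypothesis guarantees this is strictly fewer than $e(R^x)$, so a smaller legal order exists. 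Without that subcase, the hypothesis has no work to do in your write-up. Finally, the root case is left as a one-sentence gesture; the paper's Case 2 splits on $d_r(w)$ and in the $d_r(w)\geq 2$ subcase performs a two-step argument (first flip, then, if that fails, a second flip from $w$ using Observation \ref{twolargecomponents}), none of which is reproduced or replaced by your ``pick $e$ to strictly reduce $\rho$.''
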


\begin{proof}

We will prove a statement slightly stronger than the conclusion in the lemma. We will show that there is a vertex $v \in R^{x}$ such that for every troublesome child $R$, there is a blue arc from $v$ to $R$. This implies the lemma statement, as every vertex has at most $k$ blue outgoing arcs.

Suppose towards a contradiction that $R^{x}$ has more than $k$ troublesome children. Then as every vertex has at most $k$ blue outgoing arcs, there are two vertices, say $x$ and $w$, which have blue coloured arcs to troublesome children. As before, by Observation \ref{redcyclesaturationlemma}, $x$ and $w$ do not lie in a cycle in $R^{x}$. 

We separate into cases based on if $R^{x}$ is the root component or not. 

\textbf{Case 1:} $R^{x}$ is not the root.

Without loss of generality, we may assume that there is a vertex other than $x$ which determines the legal order for $R^{x}$, call this vertex $z$. 

\textbf{Subcase 1:} $d_{r}(z) \geq 2$. 

Let $(x,y)$ be an arc coloured blue such that $R^{y}$ is a troublesome child of $R^{x}$. Then let $e$ be the edge incident to $x$ which lies on any $(x,z)$ path in $R^{x}$. Then flip on $e$ and $(x,y)$. Note that since $e(R^{y}) \leq 1$ and  $d_{r}(z) \geq 2$, the resulting component containing $x$ has at most as many edges as $R^{x}$. Hence the residue function does not increase. Furthermore, we can find a smaller legal order, simply by taking the same legal order up until $R^{x}$, and then replacing $R^{x}$ with the component containing $z$, and then completing the order arbitrarily. This new legal order is smaller, as the component containing $z$ has strictly fewer edges than $R^{x}$, contradicting our choice of minimal counterexample.   

\textbf{Subcase 2:} $d_{r}(z) = 1$

There are two possibilities, either $z$ could have an arc coloured blue, say $(z,q)$, such that $R^{q}$ is a troublesome child of $R^{x}$ or there is no such arc. First suppose that $z$ does not have such an arc. Let $e$ be the edge incident to $x$ on any $(x,z)$ path in $R^{x}$, and $e'$ be the edge incident to $w$ on any $(w,z)$ path in $R^{x}$. Then either the component containing $z$ in $R^{x} -e$ has at least one edge, or the component containing $z$ in $R^{x} -e'$ has at least one edge. Without loss of generality we can assume that the component containing $z$ in $R^{x} -e$ has at least one edge. Then perform a flip on $(x,y)$ and $e$. It follows by the same argument as in Subcase 1 that this contradicts our choice of counterexample. 

Now suppose that there is an arc $(z,q)$ such that $R^{q}$ is a troublesome child of $R^{x}$. Then let $e$ be the edge incident to $z$ in $R^{x}$. Flip on $e$ and $(z,q)$. Note that the resulting component containing $z$ has fewer edges, as $R^{q}$ is troublesome, and we assume that if $\frac{d}{d+k+1} < \frac{1}{2}$ that $e(R^{x}) \geq 2$, and otherwise $e(R^{x}) \geq 3$. Furthermore, we did not increase the residue function, as the component of $R^{x} -e$ not containing $z$ has fewer edges, and after flipping the component containing $z$ has at most $d$ edges. Now we find a smaller legal order by taking the same ordering up to $R^{x}$, and then replacing $R^{x}$ with the component containing $z$, and the complete the order arbitrarily. By the above observations, this is a smaller legal order, contradicting our choice of counterexample.

\textbf{Case 2:} $R^{x}$ is the root component.

In this case, we split into cases based on the red degree of $w$, and follow effectively the same analysis as above. 

\textbf{Subcase 1:} $d_{r}(w) = 1$

Let $(w,q)$ be a blue arc such that $R^{q}$ is a troublesome child of $R^{x}$. 
Let $e$ be the edge incident to $w$ in $R^{x}$. Flip on $e$ and $(w,q)$. The resulting component containing $w$ has less than $d$ edges. Furthermore, this reduces the residue function as now the component of $R^{x} -e$ not containing $w$ has fewer edges, and $e(R^{x}) >d$ to begin with, as $R^{x}$ is the root. Hence we have a contradiction. 

\textbf{Subcase 2:} $d_{r}(w) \geq 2$. 

Now let $e$ be the edge incident to $x$ on any $(x,w)$-path in $R^{x}$. Flip on $(x,y)$ and $e$. Now the new component containing $x$ has at most the same number of edges as $R$. If this component has strictly fewer edges, then we have decreased the residue function, a contradiction. Therefore we can assume the component containing $x$ after the flip has more than $d$ edges. Hence the component of $R^{x} - e$ containing $x$ has at least $d -1$ edges. Now let $e'$ be the edge incident to $w$ on any $(w,x)$-path in $R^{x}$. Flip on $(w,q)$ and $e$. Either Observation \ref{twolargecomponents} applies, or we reduce the residue function as the root component now is strictly smaller. In either case, we obtain a contradiction. 
\end{proof}
\subsection{Pairing up the red components} \mbox{}
 For ease of notation, we make the following definition. 

\begin{definition}
 Let $T$ be a red component which is not troublesome, and suppose that $T_{1},\ldots,T_{j}$ are troublesome children of $T$. We define $T_{C}$ to be the subgraph with vertex set $V(T_{C}) = V(T) \cup V(T_{1}) \cup \cdots \cup V(T_{j})$, and the edge set is all edges coloured red on that vertex set.  
\end{definition} 
 
 Now we prove the final ingredient necessary for the proof.

\begin{lemma}
\label{pairinglemma}
Let $T$ be a red component which is not troublesome with $T_{1},\ldots,T_{j}$ as troublesome children. Then $T_{C}$ is not troublesome. Furthermore, if $e(T) >d$, then 
\[\frac{e(T_{C})}{v(T_{C})} > \frac{d}{k+d+1}.\]
\end{lemma}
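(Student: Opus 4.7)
My approach is direct computation. Let $p$ denote the number of isolated-vertex troublesome children of $T$ and $q$ denote the number of $K_2$ troublesome children; by the observation in the paper a troublesome component of the red pseudoforest is either a point or an edge, so these exhaust the possibilities. Since $v(T_C)=v(T)+p+2q$ and $e(T_C)=e(T)+q$, I rewrite the quantity to be bounded as
\[
(d+k+1)\,e(T_C)-d\,v(T_C) \;=\; \bigl[(d+k+1)\,e(T)-d\,v(T)\bigr] \;+\; (k+1-d)q \;-\; dp,
\]
where the first bracket is $\geq 0$ because $T$ is not troublesome. Note that $K_2$ is itself troublesome iff $d\geq k+2$, so $q=0$ whenever $d\leq k+1$. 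If $T$ has no troublesome children then $T_C=T$ and the conclusion is immediate, so I assume $p+q\geq 1$ from now on.

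Next I invoke the two earlier lemmas. By Lemma \ref{troublesomedonttroublesome} applied to each troublesome child (a tree), $p\geq 1$ forces $e(T)\geq d$, and $p=0$ with $q\geq 1$ (so $d\geq k+2$) forces $e(T)\geq d-1$. These bounds suffice to invoke Lemma \ref{boundedchildren} in the main situation, giving $p+q\leq k$. The proof then splits into two cases. In Case 1 ($p\geq 1$) I bound the first bracket by $(k+1)e(T)-d\geq kd$ when $T$ is a tree and by $(k+1)e(T)\geq (k+1)d$ when $T$ has a cycle, while the second bracket equals $-d(p+q)+(k+1)q$, which is $\geq -dk$; the sum is $\geq 0$, and $e(T)>d$ injects an extra $k+1$ into the first bracket, yielding strict positivity. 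In Case 2 ($p=0$, $q\geq 1$) the first bracket is $\geq (k+1)(d-1)-d=kd-k-1$ and the second equals $-(d-k-1)q\geq -(d-k-1)k$, summing to $\geq k^2-1\geq 0$ for $k\geq 1$; when $e(T)>d$ the slack opens up to $(k+1)^2$.

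The main remaining obstacle is the small cases in which Lemma \ref{boundedchildren} is unavailable, namely $e(T)\leq 2$ combined with $d\geq k+1$. If $e(T)=1$, so $T=K_2$, then non-troublesomeness forces $d\leq k+1$, while Lemma \ref{troublesomedonttroublesome} requires each troublesome child to have at least $d-1\geq 1$ edges; only $K_2$ meets this, but $K_2$ is troublesome only when $d\geq k+2$, a contradiction, so $T=K_2$ admits no troublesome children. If $e(T)=2$ then a short case check shows a potential problem only for $(k,d)\in\{(1,2),(1,3)\}$, and in each of these configurations I would flip on a blue arc to a troublesome child together with a red edge of $T$ adjacent to the attaching vertex. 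By the same bookkeeping as in the preceding lemmas, the cycle count and residue are preserved but $T$ breaks into strictly smaller red components, producing a lexicographically smaller legal order and contradicting our minimality. The step I expect to take the most care is reconciling this last flipping argument with the legal-order conventions in each of the finitely many exceptional configurations, exactly as in the proof of Lemma \ref{boundedchildren}.
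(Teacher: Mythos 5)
Your approach is fundamentally the same as the paper's: invoke Lemma \ref{troublesomedonttroublesome} to lower-bound $e(T)$, invoke Lemma \ref{boundedchildren} to bound the number of troublesome children, then finish with an averaging computation. Your $(p,q)$ parameterization and the linear form $(d+k+1)e(T_C) - d\,v(T_C)$ are a clean reformulation of the ratio computation the paper carries out directly, and I verified the identity and the case bounds; they match the paper's two cases ($e(T) \geq d$ and $d-1 \leq e(T) < d$) in substance.

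Where you differ is in your explicit accounting of when Lemma \ref{boundedchildren} is actually applicable. The paper establishes $e(T) \geq 2$ and then immediately cites Lemma \ref{boundedchildren}, but that lemma's hypothesis demands $e(T) \geq 3$ whenever $\tfrac{d}{d+k+1} \geq \tfrac12$ (equivalently $d \geq k+1$), and your analysis correctly isolates the residual small configurations, namely $e(T)=2$ with $(k,d) \in \{(1,2),(1,3)\}$ (your $e(T)=1$ discussion is right: $T=K_2$ can have no troublesome child). This is a genuine subtlety that the paper's write-up glosses over, so flagging it is to your credit. However, your handling of these exceptional cases is only a sketch: you say you would flip on a blue arc to a troublesome child together with an incident red edge of $T$, but the legal-order bookkeeping is not automatic. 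For example, with $(k,d)=(1,3)$, $T$ a two-edge path $a\text{-}b\text{-}c$, two $K_2$ troublesome children hanging off $a$ and $c$, and only $a$ determining $T$'s position in the legal order, flipping at $a$ (the choice that the proof of Lemma \ref{boundedchildren} would make in Subcase~2) produces a new component containing $a$ with exactly $2=e(T)$ edges, so the legal order does \emph{not} decrease; one must instead flip at $c$. You anticipate that this reconciliation needs care, but you do not actually carry it out, so as written this is an incomplete step. To make the proof self-contained you need to either verify the flip choice case by case in those finitely many configurations, or strengthen the invocation of Lemma \ref{boundedchildren} (for instance by observing that when $d = k+1$ the troublesome children are single vertices, so $e(T) \geq 2$ already suffices for that lemma's argument, which trims the exceptional set to $(k,d)=(1,3)$ only).
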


\begin{proof}
Suppose towards a contradiction that $T_{C}$ is troublesome. By Lemma \ref{troublesomedonttroublesome}, we may assume that for all $i \in \{1,\ldots,j\}$, the inequality $e(T) + e(T_{i}) \geq d$ holds. If $\frac{d}{d+k+1} < \frac{1}{2}$, we have that $e(T_{i}) =0$,  so  $e(T) \geq 2$. Otherwise $e(T_{i}) \leq 1$, and again it follows that $e(T) \geq 2$. By Lemma \ref{boundedchildren}, $j \leq k$. We consider cases. 

\textbf{Case 1:} $e(T) \geq d$. 

In this case we have
\begin{align*}
\frac{e(T_{C})}{v(T_{C})} &= \frac{e(T) + \sum_{i=1}^{j} e(T_{i})}{v(T) + \sum_{i=1}^{j}v(T_{i})} 
\\ &= \frac{e(T) + \sum_{i=1}^{j} (v(T_{i}) -1)}{v(T) + \sum_{i=1}^{j}v(T_{i})}
\\&\geq \frac{e(T)}{v(T) +j}  
\\ & \geq \frac{e(T)}{v(T) +k}
\\ & \geq \frac{d}{k+d+1}.
\end{align*} 
The first line follows immediately from the definition of $T_{C}$. The second line follows since for all $i$, $T_{i}$ is a tree. One can check that the third line holds by expanding and simplifying the inequality. The fourth line follows as $j \leq k$. The last line follows by noticing that if $T$ is a pseudoforest component the result holds, and that if $T$ is a tree, using $e(T) = v(T) -1$ and $e(T) \geq d$ gives the claim. 

We observe that the last inequality becomes strict if $e(T) >d$. This proves the second part of the lemma. Observe that this also proves the claim when $d =2$.

\textbf{Case 2:} $e(T) <d$.

Note that $e(T) + e(T_{i}) \geq d$ (by Lemma \ref{troublesomedonttroublesome}) and that $e(T_{i}) \leq 1$. Therefore $e(T) \geq d-1$. Then as $e(T) < d$, we have $e(T_{i}) =1$ (here we are using that $d \geq 2$). Then we have:
\begin{align*}
\frac{e(T_{C})}{v(T_{C})} &= \frac{e(T) + \sum_{i=1}^{j} e(T_{i})}{v(T) + \sum_{i=1}^{j}v(T_{i})}
\\ & = \frac{e(T) + \sum_{i=1}^{j}(v(T_{i}) -1)}{v(T) + \sum_{i=1}^{j}v(T_{i})}
\\ &\geq \frac{e(T) +j}{v(T) +2j}  
\\ &\geq \frac{d + j -1}{d+2j}
\\ &\geq \frac{d+ k-1}{d+2k}
\\ &\geq \frac{d}{d+k+1}.
\end{align*}
These inequalities hold for essentially the same reasons as in Case 1. The lemma follows. 
\end{proof}

Now we finish the proof.  Let $\mathcal{R}$ denote the set of red components which are not troublesome. By Lemma \ref{troublesomedonttroublesome} it follows that,
\[V(H) = \bigcup_{R \in \mathcal{R}} V(R_{C}).\]

This follows since a troublesome component cannot have a troublesome child by  Lemma \ref{troublesomedonttroublesome}. Therefore it follows that:
\[E_{r}(H_{f,D,R}) = \bigcup_{R \in \mathcal{R}} E(R_{C}).\]
Since $R_{C}$ is not troublesome for any $R$ by Lemma \ref{pairinglemma}, and they partition $H_{f,D,R}$, we have 
\[\frac{e_{r}(H_{f,D,R})}{v(H_{f,D,R})} \geq \frac{d}{d+k+1}.\]

 Furthermore, the root component $R$ satisfies $e(R) >d$, hence,
 \[\frac{e_{r}(H_{f,D,R})}{v(H_{f,D,R})} >\frac{d}{d+k+1}.\]
 
 Therefore,
 \[\frac{e(H_{f,D,R})}{v(H_{f,D,R})} = \frac{e_{r}(H_{f,D,R})}{v(H_{f,D,R})} + \frac{e_{b}(H_{f,D,R})}{v(H_{f,D,R})} > k + \frac{d}{d+k+1},\]
 
which contradicts the maximum average degree bound and hence Theorem \ref{ourresult2} is true.

\begin{ack}
The authors would like to acknowledge Joseph Cheriyan for numerous stimulating discussions about the topic. We would also like to acknowledge the anonymous referees which greatly helped improve the presentation of the paper, and pointed out a strengthening of our initial results.  
\end{ack}

\bibliographystyle{plain}
\bibliography{pseudo}

\end{document}